\documentclass[11pt]{amsart}
\usepackage{amssymb}
\usepackage{bm}
\usepackage[centertags]{amsmath}
\usepackage{amsfonts}
\usepackage{amsthm}
\usepackage{mathrsfs}
\usepackage[normalem]{ulem}
\usepackage{comment}
\usepackage{enumitem}
\usepackage{mathabx}
\usepackage{hyperref}
\usepackage[noabbrev]{cleveref}
\usepackage{cleveref}

\usepackage{amsthm}

\newtheorem*{theorem*}{Theorem}
\usepackage{xifthen}
\usepackage{ifthenx}
\usepackage{xargs}

\usepackage[table,svgnames,x11names]{xcolor}
\usepackage[colorinlistoftodos,prependcaption,textsize=tiny,textwidth=3.5cm]{todonotes}
\presetkeys{todonotes}{fancyline}{}
\newcommandx{\change}[2][1=]{\todo[linecolor=blue,backgroundcolor=blue!25,bordercolor=blue,#1]{#2}}
\newcommandx{\changein}[2][1=]{\change[inline, caption={change}, #1]{%
    \begin{minipage}{\textwidth-20pt}#2\end{minipage}}}
\newcommandx{\todoin}[2][1=]{\todo[inline, caption={todo}, #1]{%
    \begin{minipage}{\textwidth-20pt}#2\end{minipage}}}

\newcommandx{\remove}[2][1=]{\todo[linecolor=Plum,backgroundcolor=Plum!25,bordercolor=Plum,#1]{#2}}
\newcommandx{\removein}[2][1=]{\remove[inline, caption={todo}, #1]{%
    \begin{minipage}{\textwidth-20pt}#2\end{minipage}}}

\makeatletter
\def\l@subsection{\@tocline{2}{0pt}{2.5pc}{2.5pc}{}}%
\makeatother
\makeatletter
\def\l@subsubsection{\@tocline{3}{0pt}{5pc}{5pc}{}}%
\makeatother

\usepackage{hyperref}
\hypersetup{pdftex,colorlinks=true,allcolors=black}
\usepackage{hypcap}

\DeclareFontFamily{OT1}{pzc}{}
\DeclareFontShape{OT1}{pzc}{m}{it}{<-> s * [1.10] pzcmi7t}{}
\DeclareMathAlphabet{\mathpzc}{OT1}{pzc}{m}{it}

\theoremstyle{definition}
\newtheorem{thm}{Theorem}[section]
\newtheorem{dfn}[thm]{Definition}

\newtheorem{prop}[thm]{Proposition}

\newtheorem{rem}[thm]{Remark}

\newtheorem{ntt}[thm]{Notation}

\newtheorem*{defn*}{Definition}

\newtheorem*{thm*}{Theorem}

\newtheorem*{cor*}{Corollary}

\newtheorem*{prp*}{Proposition}

\newcommand{\trn}[1]{{\left\vert\kern-0.25ex\left\vert\kern-0.25ex\left\vert #1 
    \right\vert\kern-0.25ex\right\vert\kern-0.25ex\right\vert}}

\newcommand{\trnsmall}[1]{{\vert\kern-0.25ex\vert\kern-0.25ex\vert #1 
    \vert\kern-0.25ex\vert\kern-0.25ex\vert}}

\long\def\symbolfootnote[#1]#2{\begingroup%
\def\thefootnote{\fnsymbol{footnote}}\footnote[#1]{#2}\endgroup}

\makeatletter
\@namedef{subjclassname@2020}{%
  \textup{2020} Mathematics Subject Classification}
\makeatother

\allowdisplaybreaks

\begin{document}

\title[On the SHAI property of the $R_\alpha^p$ spaces]{On the SHAI property of the Bourgain-Rosenthal-Schechtman spaces}

\author[K. Konstantos]{Konstantinos Konstantos}

\address{Konstantinos Konstantos, Department of Mathematics and Statistics, York University, 4700 Keele Street, Toronto, Ontario, M3J 1P3, Canada}

\email{konstantoskostas95@gmail.com}

\keywords{Bourgain-Rosenthal-Schechtman spaces, SHAI property}

\subjclass[2020]{47L10, 46B09, 46B25, 46E30}

\begin{abstract}

A Banach space $X$ has the SHAI property if, for every non-zero Banach space $Y$, every surjective algebra homomorphism from the algebra $\mathcal{L}(X)$ of bounded linear operators on $X$ onto $\mathcal{L}(Y)$ is injective. In this work, we prove that the Bourgain-Rosenthal-Schechtman spaces have the SHAI property, thereby answering a question posed by Johnson, Phillips, and Schechtman in 2022.

\end{abstract}

\maketitle

\tableofcontents

\section{Introduction} \label{introd}

In \cite{horvath:2020}, Horvath introduced the concept of the SHAI property (Surjective Homomorphisms Are Injective). A Banach space $X$ is said to have the SHAI property if, for every non-zero Banach space $Y$, every surjective algebra homomorphism from $\mathcal{L}(X)$ onto $\mathcal{L}(Y)$ is injective. In this case, the Banach spaces $X$ and $Y$ are necessarily isomorphic by a classical theorem of Eidelheit \cite{eidelheit:1940}. The first elementary examples of Banach spaces with the SHAI property are the finite-dimensional Banach spaces; see the second paragraph of Section~1 in \cite{horvath:2020}. In the same paper, Horvath established the SHAI property for several classical Banach spaces, including $c_0$, $\ell_p$ for $1\leq p\leq\infty$, and Hilbert spaces of arbitrary density. He also exhibited several Banach spaces that fail to have the SHAI property, including the James space, $C[0,\omega_1]$, and the complex hereditarily indecomposable Banach spaces.

In \cite{horvath:kania:shai:2021}, Horvath and Kania established the SHAI property for the long sequence spaces $c_0(\lambda)$, $\ell_{\infty}^{c}(\lambda)$, and $\ell_p(\lambda)$ for every infinite cardinal $\lambda$ and every $p\in[1,\infty)$. In \cite{johnson:phillips:schechtman:2022}, Johnson, Phillips, and Schechtman proved that spaces with subsymmetric bases and finite cotype, as well as the Schatten $p$-spaces for $1<p<\infty$, have the SHAI property. They also investigated the SHAI property for complemented subspaces of $L_p$, $1<p<\infty$. In particular, they proved that $L_p$, $\ell_p\oplus\ell_2$, $\ell_p(\ell_2)$, the Rosenthal space $X_p$ \cite{rosenthal:1970:Xp}, and the Schechtman spaces $X_p^{\otimes n}$, $n\in\mathbb{N}$ \cite{schechtman:1975}, have the SHAI property.

The present work is motivated by a question posed by Johnson, Phillips, and Schechtman in \cite{johnson:phillips:schechtman:2022}, asking whether the Bourgain--Rosenthal--Schechtman spaces $R_{\alpha}^{p}$ have the SHAI property. We answer this question affirmatively. In the same paper, they asked whether every complemented subspace of $L_p$, or, more specifically, every complemented subspace of $L_p$ with an unconditional basis, has the SHAI property. Both questions remain open.

We focus on the SHAI property of the Bourgain--Rosenthal--Schechtman spaces. In their seminal 1981 paper \cite{bourgain:rosenthal:schechtman:1981}, see also \cite{alspach:1999, konstantos:motakis:2025, konstantos:motakis:brs:2025, konstantos:2026}, Bourgain, Rosenthal, and Schechtman introduced a transfinite family of complemented subspaces of $L_p$, denoted by $R_\alpha^p$, $0\leq\alpha<\omega_1$. They used this family to establish the existence of uncountably many pairwise non-isomorphic complemented subspaces of $L_p$.

Our main result is the following.

\begin{theorem*}
Let $\alpha<\omega_1$. For $1<p<\infty$, the Bourgain--Rosenthal--Schechtman space $R_\alpha^p$ has the SHAI property.
\end{theorem*}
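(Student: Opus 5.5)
The plan is to reduce to a known sufficient criterion for SHAI and then check its hypotheses for $R_\alpha^p$ using the structure of these spaces. Horvath's criterion and its refinement by Johnson, Phillips and Schechtman say the following. Let $\psi:\mathcal{L}(X)\to\mathcal{L}(Y)$ be a surjective homomorphism. Then $\ker\psi$ is a proper closed two-sided ideal, and $\psi$ is injective as soon as $\ker\psi$ contains no idempotent of infinite rank. Indeed, by Eidelheit-type arguments $\psi$ is continuous, and $\ker\psi$ does not contain the finite-rank operators. A nonzero ideal always contains all finite-rank operators, so the only way $\ker\psi$ can be nonzero while missing the finite-rank operators is ruled out; hence $\ker\psi=\{0\}$ once we know it is contained in the ideal generated by finite-rank idempotents. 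It therefore suffices to show the following: if $P$ is an idempotent with infinite-dimensional range $Z=P(X)$ and $\psi(P)=0$, we reach a contradiction. The JPS strategy, which I would follow, is to show that the identity $I_X$ factors through $P$ in a way that survives $\psi$ up to a small perturbation. Concretely, I would find operators $A_1,\dots,A_N$ and $B_1,\dots,B_N$ with $\|I_X-\sum_i A_iPB_i\|<1/\|\psi\|$. Applying $\psi$ gives $\|I_Y\|<1$, which is absurd since $Y\neq 0$.

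So the core is a structural lemma about $R_\alpha^p$. For every complemented infinite-dimensional subspace $Z\subseteq R_\alpha^p$, and for each $\varepsilon>0$, I want $I$ to be within $\varepsilon$ of a finite sum of operators factoring through the projection onto $Z$. A strong form would be: $Z$ contains a complemented copy of a space $W$ such that $R_\alpha^p$ embeds complementably into a finite direct sum $W^N$, or into an approximate one. The facts I would use are these.
\begin{enumerate}[label=(\roman*)]
\item Every infinite-dimensional complemented subspace of $L_p$ contains a complemented copy of $\ell_p$, or of $\ell_2$ when $p>2$, by Kadec--Pe\l czy\'nski.
\item $R_\alpha^p$ is built as an $\ell_p$-sum of ``tensor-type'' pieces $R_\beta^p\otimes\cdots$ indexed by ordinals $\beta<\alpha$, with a Haar-type unconditional basis.
\item $\ell_p$ alone does not generate $R_\alpha^p$ for $\alpha\geq1$.
\end{enumerate}
Fact (iii) shows that the naive reduction via (i) fails. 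The kernel could therefore a priori be a nontrivial ideal such as the closure of operators factoring through $\ell_p$. Hence the actual argument must exploit the specific form of $\ker\psi$: it is an ideal $\mathcal{J}$ with $\mathcal{L}(X)/\mathcal{J}\cong\mathcal{L}(Y)$.

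The refined plan follows JPS's treatment of $X_p^{\otimes n}$. Suppose $\mathcal{J}=\ker\psi$ is nonzero. Since the quotient is $\mathcal{L}(Y)$, which is prime, and since every idempotent in $\mathcal{L}(Y)$ with infinite-dimensional range $Y_0$ has $Y_0\oplus$ (complement) $\cong Y$, one derives the following. For a projection $Q\notin\mathcal{J}$, either $Q$ or $I-Q$ carries ``enough of $X$'': $\psi(Q)$ and $\psi(I-Q)$ cannot both be finite rank. Next I would use that $R_\alpha^p$ is isomorphic to its square, and more precisely to $\ell_p(R_\alpha^p)$, or at least to $R_\alpha^p\oplus R_\alpha^p$, via a Haar-basis decomposition. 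Transferring this through $\psi$ yields a decomposition $Y\cong Y_1\oplus Y_2$ in which some projection onto a copy of $R_\alpha^p$ survives. Then I would show that any block projection $Q$ onto a ``large'' piece satisfies $I\in$ the ideal generated by $Q$. This is a primary-type factorization property.

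The main obstacle is exactly this factorization. For each infinite-rank projection $Q$ on $R_\alpha^p$, one needs either $Q\in\mathcal{J}_{\ell_p\text{-type}}$, some proper ideal handled separately, or $I$ factors through $Q$. I would attack it by transfinite induction on $\alpha$, using the Bourgain--Rosenthal--Schechtman ordinal index. If the index of $Q(X)$ is at least that of $R_\alpha^p$, then $R_\alpha^p$ embeds complementably in $Q(X)$ via a tree/blocking argument, so $I$ factors through $Q$. If the index is smaller, then $Q(X)$ embeds into some $R_\beta^p$ with $\beta<\alpha$. I would then show that the ideal of operators factoring through such spaces cannot be the kernel of a surjection onto some $\mathcal{L}(Y)$, because the induction hypothesis applied to $R_\beta^p$ forces a contradiction with the primeness of the quotient.
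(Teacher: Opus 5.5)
There is a genuine gap, and it begins with your opening reduction. If $\psi$ is not injective, then $\ker\psi$ is a nonzero two-sided ideal and therefore (as you note) contains every finite-rank operator. So ``$\ker\psi$ does not contain the finite-rank operators'' is just a restatement of injectivity. Continuity of $\psi$ does hold, since $\mathcal{L}(Y)$ is semisimple, but it does not give this. Your criterion ``$\psi$ is injective as soon as $\ker\psi$ contains no idempotent of infinite rank'' is likewise unproved, and it skips precisely the hard case: ideals such as $\mathcal{K}(X)$ or the strictly singular operators contain no infinite-rank idempotent. For $\ell_p$ the compact operators form the only nontrivial closed ideal, so SHAI for $\ell_p$ amounts to showing that $\mathcal{L}(\ell_p)/\mathcal{K}(\ell_p)$ is not isomorphic to any $\mathcal{L}(Y)$; your reduction would make this vacuous. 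Hence the device $\|\mathrm{id}_X-\sum_i A_iPB_i\|<1/\|\psi\|$ can at best show that ``large'' idempotents avoid $\ker\psi$. It never excludes a kernel made of small operators.

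The structural half of the plan then rests on claims that are inaccurate or unsupported. (ii) misdescribes $R^p_\alpha$: successor steps are disjoint sums of two copies, and limit steps are independent sums, not $\ell_p$-sums. $R^p_\alpha\cong\ell_p(R^p_\alpha)$ is asserted without proof. Neither half of your dichotomy for $Q(X)$ is established (large index gives a complemented copy of $R^p_\alpha$; small index gives an embedding into some $R^p_\beta$ with $\beta<\alpha$), and the ordinal index does not by itself produce embeddings into $R^p_\beta$. The final step appeals to primeness of $\mathcal{L}(Y)$ and to SHAI for $R^p_\beta$, which is a statement about homomorphisms defined on $\mathcal{L}(R^p_\beta)$, not about ideals of $\mathcal{L}(R^p_\alpha)$; it contains no argument.

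The missing idea is the Johnson--Phillips--Schechtman mechanism, which never identifies $\ker\psi$ or classifies complemented subspaces. Suppose $(X_n)$ is an unconditional FDD of $X$ and $\{N_\gamma:\gamma<\mathfrak{c}\}$ is an almost disjoint continuum such that $X$ is isomorphic to a complemented subspace of each $[X_n:n\in N_\gamma]$. Then $\mathrm{id}_X=SP_{N_\gamma}T$ for suitable $S,T$, so each $\psi(P_{N_\gamma})$ is a nonzero idempotent. If $\ker\psi\neq\{0\}$, these idempotents are also pairwise orthogonal, because $P_{N_\gamma}P_{N_\delta}$ has finite rank for $\gamma\neq\delta$. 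The rest of the JPS argument uses a disjoint lower $p$ estimate for the FDD to rule this out; that estimate is automatic here, since $L_p$ has finite cotype.

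The paper verifies this hypothesis for $R^{p,0}_\alpha\cong R^p_\alpha$, $\omega\leq\alpha<\omega_1$, using the tree-indexed FDD $(X_\lambda)_{\lambda\in\mathcal{T}_\alpha}$. It builds almost disjoint families $M_\gamma(\alpha)\subseteq\mathcal{T}_\alpha$ by transfinite recursion: at $\omega$ it copies $\{N_\gamma\}$; at $\beta+n$ with $\beta$ a limit it takes the root together with all $(\beta+n,I)^{\frown}\mu$, $I\in\mathcal{D}^n$, $\mu\in M_\gamma(\beta)$; at a limit $\alpha$ it takes all $(\alpha)^{\frown}\mu$ with $\mu\in M_\gamma(\beta_n)$, $n\in N_\gamma$, for a fixed cofinal sequence $(\beta_n)$. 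It then shows via explicit distributional embeddings that $R^{p,0}_\alpha$ distributionally embeds into $[X_\lambda:\lambda\in M_\gamma(\alpha)]$. Complementation follows because distributional copies of $R^{p,0}_\alpha$ are complemented in $L_p$. Finite $\alpha$ is trivial.
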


The paper is organized as follows. In Section \ref{not and con}, we recall a sufficient condition, established in \cite{johnson:phillips:schechtman:2022}, for a Banach space with an unconditional FDD to have this property. Specifically, it was shown that if the FDD satisfies property $(\#)$—namely, there exists an almost disjoint continuum $\lbrace N_{\gamma}: \gamma < \mathfrak{c} \rbrace$ of infinite subsets of $\mathbb{N}$ such that, for every $\gamma< \mathfrak{c}$, the space $X$ is isomorphic to the closed linear span of the FDD components indexed by $N_{\gamma}$—and the FDD satisfies a disjoint lower $p$ estimate for some $p<\infty$, then $X$ has the SHAI property. In this section, we prove that the aforementioned sufficient condition remains valid if property $(\#)$ is replaced by property $(\#^{\prime})$, in which the space $X$ is isomorphic to a complemented subspace of the closed linear span of the FDD components indexed by $N_{\gamma}$. Later, in Section \ref{section for SHAI of BRS}, we will use this variation of the sufficient condition to establish the SHAI property of the spaces $R_{\alpha}^{p}$.

We conclude Section \ref{not and con} by recalling the standard Haar system notation used in the recursive definition of the spaces $R_{\alpha}^{p}$ in Section \ref{recal BRS spaces}. In Section \ref{recal BRS spaces}, we recall a distributional representation of the Bourgain-Rosenthal-Schechtman spaces, first introduced in \cite{konstantos:motakis:brs:2025}, which allows us to view these spaces as subspaces of $L_{p}[0,1]$. Moreover, for every $\alpha < \omega_{1}$, we recall the structure of the isomorphic hyperplane $R_{\alpha}^{p,0}$ of $R_{\alpha}^{p}$ consisting of all random variables of mean zero.

In Section 4, we recall the unconditional FDD of the spaces $R_{\alpha}^{p,0}$, constructed in \cite{konstantos:motakis:brs:2025}. For every ordinal $\alpha < \omega_{1}$, there exists a countable well-founded tree $\mathcal{T}_{\alpha}$ and a collection of spaces $(X_\lambda)_{\lambda \in \mathcal{T}_{\alpha}}$, where each $X_\lambda$ is a compressed copy of a finite dimensional $L_p$ space, which together form an unconditional FDD of $R_{\alpha}^{p,0}$.

Following the discussion of the unconditional FDD of the $R_{\alpha}^{p,0}$ spaces, in Section 5 we focus on the main result of this work, namely, the SHAI property of the infinite-dimensional $R_{\alpha}^{p}$ spaces. More precisely, we establish the SHAI property of the $R_{\alpha}^{p,0}$ spaces, which immediately yields the SHAI property of the $R_{\alpha}^{p}$ spaces. The proof is based on the aforementioned variation of the sufficient condition discussed in Section 2. For every $\alpha < \omega_{1}$, the problem is reduced to the construction of an almost disjoint continuum $\lbrace M_\gamma (\alpha): \gamma < \mathfrak{c} \rbrace$ of infinite subsets of $\mathcal{T}_{\alpha}$ such that, for every $\gamma < \mathfrak{c}$, the space $R_{\alpha}^{p,0}$ is isomorphic to a complemented subspace of the closed linear span of the FDD components indexed by $M_{\gamma} (\alpha)$. Indeed, $R_{\alpha}^{p,0}$ distributionally embeds into the aforementioned closed linear span, and therefore this distributional copy is complemented. The construction of the almost disjoint continuum is recursive over the countable ordinals, and for the purpose of obtaining a distributional representation of $R_{\alpha}^{p,0}$ into the aforementioned closed linear span, we assemble appropriate distributional embeddings.

\section{Notation and concepts} \label{not and con}

In this section, we introduce the standard terminology and notation used throughout the paper. We begin by recalling the necessary notions from probability theory and Banach space theory. We then review the concept of the SHAI property and conclude by recalling the notation for the Haar system.

We write \(\mathbb{N}=\{1,2,\dots\}\) for the set of positive integers, \(\mathbb{N}_{0}=\{0\}\cup\mathbb{N}\) and \(\mathbb{R}\) for the set of real numbers. Throughout the paper, we fix \(1<p<\infty\) and consider the space \(L_{p}([0,1])\) with respect to the Borel \(\sigma\)-algebra on \([0,1]\), denoted by \(\mathcal{B}([0,1])\), and the Lebesgue measure, denoted by \(|\cdot|\). Henceforth, we simply write \(L_{p}\) instead of \(L_{p}([0,1])\).

For a set \(A\subseteq[0,1]\), we denote by \(\chi_A:[0,1]\to\{0,1\}\) its characteristic function. If \(f\) is a real-valued random variable on \([0,1]\), then the distribution of \(f\), denoted by \(\mathrm{dist}(f)\), is the probability measure on \(\mathcal{B}(\mathbb{R})\) defined by 
\begin{align*}
(\mathrm{dist}(f))(A)=|[f \in A]|,\;\; A\in\mathcal{B}(\mathbb{R}).
\end{align*}
The characteristic function of \(f\) is defined by 
\begin{align*}
\Phi_f(t)=\int_{0}^{1}e^{itf(x)}\,dx, \;\; t\in\mathbb{R}.
\end{align*}
A fundamental theorem in probability theory states that, for two random variables \(f\) and \(g\), $\mathrm{dist}(f)=\mathrm{dist}(g)$ if and only if $\Phi_f(t)=\Phi_g(t)$, for all $t\in\mathbb{R}$. If \(f\) is a real-valued random variable on \([0,1]\), we denote its expectation by $\mathbb{E}(f)$.

For a subset $S$ of a linear (respectively, normed) space, we denote by $\langle S \rangle$ (respectively, $[S]$) its linear (respectively, closed linear) span. A family $(e_{a})_{a \in A}$ in a Banach space $X$ is called an unconditional Schauder basis of $X$ if every vector $x\in X$ admits a unique representation $x=\sum_{a\in A} \lambda_a e_{a}$, where $(\lambda_{a})_{a \in A}$ is a family of real numbers, and the convergence is unconditional. A family $(X_a)_{a\in A}$ of closed subspaces of a Banach space $X$ is called an unconditional Schauder decomposition of $X$ if every vector $x\in X$ admits a unique representation $x=\sum_{a\in A}x_a$, where $x_a\in X_a$ for every $a\in A$, and the convergence is unconditional. Every unconditional Schauder decomposition $(X_a)_{a\in A}$ of a Banach space $X$ induces a family of bounded linear projections $(P_a)_{a\in A}$ on $X$ defined by $P_b(\sum_{a\in A}x_a)=x_b$. If each $X_a$ is finite-dimensional, the family $(X_a)_{a\in A}$ is called an unconditional finite-dimensional decomposition (FDD). If $(X_{a})_{a \in A}$ is an unconditional Schauder decomposition of a Banach space $X$, then for every subset $B\subset A$, the net $\{\sum_{a\in F}P_a:F\subset B,\ F\ \text{finite}\}$ is bounded in $\mathcal{L}(X)$ and converges strongly to a projection $P_B$ onto $[X_a:a\in B]$. The suppression constant of the unconditional Schauder decomposition $(X_a)_{a\in A}$ is defined by $\sup\left\{\left\|\sum_{a\in F}P_a\right\|:F\subset A,\ F\ \text{finite}\right\}$. Consequently, for every subset $B\subset A$, the projection $P_B$ satisfies $\|P_B\|\leq\sup\left\{\left\|\sum_{a\in F}P_a\right\|:F\subset A,\ F\ \text{finite}\right\}$. The above topics from Banach space theory can be found in \cite{albiac:kalton:2006} and \cite{lindenstrauss:tzafriri:1977}.

\subsection{The SHAI property} Following the definition of the SHAI property given in the introduction, we recall a sufficient condition, established in \cite{johnson:phillips:schechtman:2022}, under which a Banach space with an unconditional FDD has this property.

In \cite{johnson:phillips:schechtman:2022}, the authors showed that if the FDD satisfies property $(\#)$---namely, that $X$ is isomorphic to the closed linear span of the FDD components indexed by each member of an almost disjoint continuum of infinite subsets of $\mathbb{N}$---and the FDD satisfies a disjoint lower $p$ estimate for some $p<\infty$, then $X$ has the SHAI property.

Next, we recall the concepts of property $(\#)$ and the disjoint lower $p$ estimate. Recall that a family of sets is called almost disjoint if the intersection of any two distinct members of the family is finite.

\begin{dfn}
Let $X$ be a Banach space. Suppose that $(X_{n})_{n=1}^{\infty}$ is an unconditional FDD of the space $X$. We say that $(X_{n})_{n=1}^{\infty}$ has property $(\#)$ provided there is an almost disjoint continuum $\lbrace N_{\gamma}: \gamma < \mathfrak{c} \rbrace$ of infinite subsets of $\mathbb{N}$ such that for each $\gamma < \mathfrak{c}$, the space $X$ is isomorphic to the space
\begin{align*}
 \Big[ \bigcup_{n \in N_\gamma} X_{n}  \Big].   
\end{align*}
\end{dfn}

The definition of the disjoint lower $p$ estimate is stated for unconditional Schauder decompositions.

\begin{dfn}
Let $X$ be a Banach space. Suppose that $(X_{a})_{a \in A}$ is an unconditional Schauder decomposition of $X$. We say that $(X_{a})_{a \in A}$ has a disjoint lower $p$ estimate for some $p < \infty$ provided that there is $C< \infty$ so that whenever $x_1,...,x_n$ are finitely many vectors in $X$ such that for every $a \in A$ there is at most one $i$ with $1 \leq i \leq n$ for which $P_\alpha x_i \neq 0$, we have the inequality 
\begin{align*}
\Big\| \sum_{i=1}^{n} x_i  \Big \| \geq  \frac{1}{C} \Big( \sum_{i=1}^{n} \| x_i \|^{p} \Big)^{1/p}.    
\end{align*}
The constant $C$ is called the constant of the disjoint lower $p$ estimate. 
\end{dfn}

In the following remark, we describe the relationship between the notions of a disjoint lower estimate and cotype. Recall that a Banach space $X$ is said to have finite cotype $q$ for some $2 \leq q <  \infty$ (see \cite[Definition 6.2.10]{albiac:kalton:2006}) if there is a constant $C < \infty$ such that for every finite sequence of vectors $x_1,...,x_n$ in $X$ , we have the inequality 
\begin{align*}
\Big( \mathbb{E}  \Big\| \sum_{k=1}^{n} \epsilon_k x_k \Big \|^{q} \Big)^{1/q} \geq \frac{1}{C} \Big( \sum_{k=1}^{n} \| x_k \|^{q} \Big)^{1/q},   
\end{align*}
where the expectation is taken with respect to the uniform probability measure on all sequences $(\epsilon_k)_{k=1}^n \in \{-1,1\}^n$.

\begin{rem} \label{cotype and dis lower estimate}
Note that if a Banach space $X$ has finite cotype $q$, then every unconditional Schauder decomposition of $X$ has disjoint lower $q$ estimate, where the constant depends only on the suppression constant of the decomposition and the cotype $q$ constant of $X$.
\end{rem}

Recall that \(L_p\) has finite cotype. Since \(R_{\alpha}^{p,0}\) is a subspace of \(L_p\), it follows that \(R_{\alpha}^{p,0}\) also has finite cotype. Hence, for every \(\alpha < \omega_1\), the unconditional FDD \((X_{\lambda})_{\lambda \in \mathcal{T}_{\alpha}}\) of \(R_{\alpha}^{p,0}\) (see Section \ref{Section for S-decompostion}) satisfies a finite disjoint lower estimate.

The proof of the following theorem can be found in \cite[Theorem 1.4]{johnson:phillips:schechtman:2022}.

\begin{thm} \label{suf cond for SHAI}
Let $X$ be a Banach space. Suppose that $(X_{n})_{n=1}^{\infty}$ is an unconditional FDD of $X$. Assume that $(X_{n})_{n=1}^{\infty}$ has property $(\#)$ and $(X_{n})_{n=1}^{\infty}$ has a disjoint lower $p$ estimate for some $p < \infty$. Then $X$ has the SHAI property. 
\end{thm}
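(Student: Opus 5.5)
Let $\Phi:\mathcal{L}(X)\to\mathcal{L}(Y)$ be a surjective algebra homomorphism with $Y\neq 0$. The plan is to show that $\ker\Phi=\{0\}$, arguing by contradiction.

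Since $\ker\Phi$ is a proper two-sided ideal, it contains no invertible operator. Suppose $\ker\Phi\neq\{0\}$. Every non-zero ideal of $\mathcal{L}(X)$ contains all finite rank operators. Let $Q_\gamma=P_{N_\gamma}$ be the projection onto $Z_\gamma=[\bigcup_{n\in N_\gamma}X_n]$, with norm bounded by the suppression constant. Property $(\#)$ gives isomorphisms $U_\gamma:X\to Z_\gamma$. The first key observation is that $Q_\gamma\notin\ker\Phi$ for every $\gamma$. Otherwise $I_X=U_\gamma^{-1}Q_\gamma U_\gamma$ would lie in $\ker\Phi$, and then $\Phi(I_X)=0$. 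But $\Phi$ is surjective, so $\Phi(I_X)=I_Y\neq0$, a contradiction. Hence $\Phi(Q_\gamma)$ is a non-zero idempotent for each of the continuum many $\gamma$.

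For $\gamma\neq\delta$ the set $N_\gamma\cap N_\delta$ is finite, so $Q_\gamma Q_\delta=P_{N_\gamma\cap N_\delta}$ has finite rank and lies in $\ker\Phi$. Thus the $\Phi(Q_\gamma)$ are mutually orthogonal non-zero idempotents in $\mathcal{L}(Y)$. The next step is to get a uniform bound on their norms. I would first show that $\Phi$ is automatically continuous; this is standard for surjective homomorphisms onto $\mathcal{L}(Y)$, via Johnson's theorem on automatic continuity using the decomposition $X\simeq X\oplus X$ (which follows from $(\#)$). Granting this, $\|\Phi(Q_\gamma)\|\le\|\Phi\|K$, where $K$ is the suppression constant. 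Choose unit vectors $y_\gamma$ in the range of $\Phi(Q_\gamma)$.

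The main step, and the obstacle, is to use the disjoint lower $p$ estimate to contradict the existence of continuum many such orthogonal idempotents. For a finite set $F$ of indices, make the $N_\gamma$, $\gamma\in F$, pairwise disjoint by removing finite sets; this does not change the classes of the $Q_\gamma$ modulo $\ker\Phi$. Take signs $\varepsilon$ and let $S=\sum_{\gamma\in F}\varepsilon_\gamma Q_\gamma$, which has norm at most $K$ by unconditionality. The lower estimate should be applied through operators $T_\gamma:X\to Z_\gamma$ obtained from the isomorphisms $U_\gamma$ composed with the $Q_\gamma$. By the disjoint lower $p$ estimate, $\|\sum_\gamma T_\gamma x_\gamma\|\ge C^{-1}(\sum\|T_\gamma x_\gamma\|^p)^{1/p}$. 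Transporting this through $\Phi$, one wants to show that the map $\ell_p$-sums$\,\to Y$, given by $(y_\gamma)\mapsto\sum\Phi(\cdot)y_\gamma$, is bounded below uniformly in $|F|$. Following Johnson--Phillips--Schechtman, the cleanest route is to build a single operator $R=\sum_\gamma U_\gamma A_\gamma U_\gamma^{-1}Q_\gamma$ for arbitrary contractions $A_\gamma$. Such an $R$ is bounded on $X$ because the blocks are disjoint and the decomposition is unconditional. Then $\Phi(R)$ acts on each $\Phi(Q_\gamma)Y$ as a "copy" of $\Phi(A_\gamma)$.

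Choosing the $A_\gamma$ so that the uniform boundedness of $\|\Phi(R)\|$ contradicts the uncountability is the delicate part. I expect the argument to go as follows. Pick a countable subfamily where $\|\Phi(Q_\gamma)\|$ is bounded below and the vectors $y_\gamma$ can be combined. Use the lower $p$ estimate, pulled back through a right inverse of $\Phi$ on finite-dimensional pieces, to force $\|\sum_{\gamma\in F}y_\gamma\|\gtrsim|F|^{1/p}$. Meanwhile, an operator summing rank-one pieces has norm bounded independently of $|F|$, and this gives the contradiction. This final estimate is the step I anticipate will require the most care.
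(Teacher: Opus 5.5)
Your opening steps are sound, and they are exactly the part of the argument the paper makes explicit. The paper does not reprove this theorem; it cites \cite[Theorem~1.4]{johnson:phillips:schechtman:2022} and isolates only the nonvanishing of $\Phi(P_{N_\gamma})$. Concretely: a nonzero $\ker\Phi$ contains the finite-rank operators; $\Phi(P_{N_\gamma})\neq 0$ because $U_\gamma^{-1}P_{N_\gamma}U_\gamma=\mathrm{id}_X$; and the $E_\gamma:=\Phi(P_{N_\gamma})$ are pairwise orthogonal idempotents with $\|\sum_{\gamma\in F}\pm E_\gamma\|$ bounded independently of $F$. Continuity of $\Phi$ is true, but not for the reason you give. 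You have not shown that $(\#)$ yields $X\cong X\oplus X$. Instead, use B.~E.~Johnson's theorem that every homomorphism onto a semisimple Banach algebra, such as $\mathcal{L}(Y)$, is continuous.

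After that the proposal breaks down. The operator $R=\sum_{\gamma\in F}U_\gamma A_\gamma U_\gamma^{-1}Q_\gamma$ is not bounded uniformly in $|F|$. Unconditionality controls $\sum_{\gamma\in F}\pm Q_\gamma$, not block-diagonal operators with arbitrary blocks, and the disjoint lower $p$ estimate only gives $\|R\|\le C|F|^{1-1/p}$. More importantly, your final step has no source for an $|F|$-independent upper bound. Via the open mapping theorem and preimages of rank-one operators $y^*\otimes y_0$, the lower estimate does transfer to $Y$: $\|\sum_{\gamma\in F}a_\gamma y_\gamma\|\ge c(\sum|a_\gamma|^p)^{1/p}$ for unit vectors $y_\gamma\in E_\gamma Y$. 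But the rank-one operator that ``sums'' the $y_\gamma$ has norm exactly $\|\sum_{\gamma\in F}y_\gamma\|$, so nothing caps it. In fact no argument confined to finitely or countably many $\gamma$ can produce a contradiction. Every estimate you have is satisfied by $Y=\ell_p(\Gamma)$ with $E_\gamma$ the coordinate projections.

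The missing idea is one that exploits the size of the family. Applied to a single vector, the transferred estimate gives $(\sum_\gamma\|E_\gamma y\|^p)^{1/p}\le C\|y\|$ for every $y\in Y$, so each $y$ meets only countably many $E_\gamma$. Since $Y$ is not known to be separable, this is not yet a contradiction.

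One way to conclude is a cardinality count. The same estimate, applied to successive blocks of indices, shows that $\sum_{\gamma\in\Lambda}E_\gamma y$ converges for every $\Lambda\subseteq\mathfrak{c}$ and every $y$. This defines operators $F_\Lambda\in\mathcal{L}(Y)$ with $F_\Lambda E_\delta=E_\delta$ for $\delta\in\Lambda$ and $F_\Lambda E_\delta=0$ otherwise. These $2^{\mathfrak{c}}$ operators are pairwise distinct. On the other hand, $|\mathcal{L}(Y)|\le|\mathcal{L}(X)|\le\mathfrak{c}$, because $X$ is separable and $\Phi$ is onto. In this argument it is essential that the almost disjoint family has cardinality $\mathfrak{c}$. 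Your plan to ``pick a countable subfamily'' discards exactly that information.
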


For the purposes of this paper, we work with a variation of property $(\#)$, which we denote by $(\#')$.

\begin{dfn}
Let $X$ be a Banach space. Suppose that $(X_{n})_{n=1}^{\infty}$ is an unconditional FDD of $X$. We say that $(X_{n})_{n=1}^{\infty}$ has property $(\#^{\prime})$ provided there is an almost disjoint continuum $\lbrace N_{\gamma}: \gamma < \mathfrak{c} \rbrace$ of infinite subsets of $\mathbb{N}$ such that for each $\gamma < \mathfrak{c}$, the space $X$ is isomorphic to a complemented subspace of the space
\begin{align*}
 \Big[ \bigcup_{n \in N_\gamma} X_{n}  \Big].   
\end{align*}
\end{dfn}

Note that, in the above definition, we regard each \(N_{\gamma}\) as a subset of \(\mathbb{N}\), serving as an index set. In Section~\ref{section for SHAI of BRS}, we study property \((\#')\) for the infinite-dimensional spaces \(R_{\alpha}^{p,0}\). Since each space \(R_{\alpha}^{p,0}\) admits an unconditional FDD indexed by a countable well-founded tree \(\mathcal{T}_{\alpha}\), the corresponding index sets \(N_{\gamma}\) will be constructed as subsets of \(\mathcal{T}_{\alpha}\).

For the remainder of this subsection, we prove that Theorem~\ref{suf cond for SHAI} remains valid when property $(\#)$ is replaced by property $(\#')$. We state the corresponding variant of the aforementioned theorem.

\begin{thm} \label{suf cond of SHAI property, var}
Let $X$ be a Banach space. Suppose that $(X_{n})_{n=1}^{\infty}$ is an unconditional FDD of $X$. Assume that $(X_{n})_{n=1}^{\infty}$ has property $(\#^{\prime})$ and $(X_{n})_{n=1}^{\infty}$ has a disjoint lower $p$ estimate for some $p < \infty$. Then $X$ has the SHAI property. 
\end{thm}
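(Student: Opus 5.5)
We follow the Johnson--Phillips--Schechtman argument for Theorem~\ref{suf cond for SHAI} and check that the isomorphism from property $(\#)$ is only ever used through a bounded operator into $[\bigcup_{n\in N_\gamma}X_n]$ with a bounded left inverse. Fix a non-zero Banach space $Y$ and a surjective homomorphism $\theta:\mathcal{L}(X)\to\mathcal{L}(Y)$. We argue by contradiction and assume $\ker\theta\neq\{0\}$. Since $\ker\theta$ is a non-zero two-sided ideal, it contains every finite-rank operator. In particular, $\theta(P_F)=0$ for every finite $F\subset\mathbb{N}$, so $\theta(P_{B})=\theta(P_{B'})$ whenever $B\triangle B'$ is finite. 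Applying this to $B=N_\gamma\cap N_\delta$ (which is finite) for $\gamma\neq\delta$, the idempotents $Q_\gamma:=\theta(P_{N_\gamma})$ satisfy $Q_\gamma Q_\delta=\theta(P_{N_\gamma\cap N_\delta})=0$.

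\textbf{Step 1: each $Q_\gamma$ is non-zero.} By $(\#')$ there are operators $U_\gamma:X\to X$ with range in $[\bigcup_{n\in N_\gamma}X_n]$ and $V_\gamma:X\to X$ with $V_\gamma U_\gamma=I_X$. For example, take $U_\gamma$ to be the isomorphic embedding onto the complemented copy, and $V_\gamma$ to be its inverse composed with the projection onto that copy, composed with $P_{N_\gamma}$. Then $P_{N_\gamma}U_\gamma=U_\gamma$, and so
\[
I_Y=\theta(V_\gamma P_{N_\gamma}U_\gamma)=\theta(V_\gamma)\,Q_\gamma\,\theta(U_\gamma).
\]
Hence $Q_\gamma\neq0$, because $Y\neq\{0\}$. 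This is the only place where $(\#)$ entered in the original proof, and complementation (rather than equality) is exactly enough here. The norms $\|\theta(U_\gamma)\|$ and $\|\theta(V_\gamma)\|$ are finite for each $\gamma$, since $\theta$ is automatically continuous: surjective homomorphisms onto $\mathcal{L}(Y)$ are continuous by Johnson's theorem, as recalled in \cite{johnson:phillips:schechtman:2022}.

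\textbf{Step 2: derive a contradiction from uncountably many non-zero, mutually orthogonal $Q_\gamma$.} A pigeonhole argument over the continuum yields $M<\infty$ and an infinite sequence $\gamma_1,\gamma_2,\dots$ such that $\|\theta(U_{\gamma_i})\|,\|\theta(V_{\gamma_i})\|\le M$. Fix $k$ and signs $\varepsilon_i\in\{-1,1\}$. The operator
\[
T=\sum_{i=1}^k \varepsilon_i P_{N_{\gamma_i}\setminus F}
\]
is bounded by the suppression constant, uniformly in $k$, once a finite set $F$ is removed to make the $N_{\gamma_i}$ disjoint. Its image satisfies $\theta(T)=\sum\varepsilon_iQ_{\gamma_i}$. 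Pick unit vectors $y_i$ with $Q_{\gamma_i}y_i=y_i$ and norm control coming from Step 1. The disjoint lower $p$ estimate, transported through the operators $\theta(U_{\gamma_i})$ and $\theta(V_{\gamma_i})$, then gives a lower bound of the form $\|\theta(\text{some operator of norm }O(1))\|\gtrsim k^{1/p}$. This contradicts the boundedness of $\theta$ as $k\to\infty$.

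We expect Step 2 to be the main obstacle, and we will reproduce it verbatim from \cite[Theorem 1.4]{johnson:phillips:schechtman:2022}. The only adaptation needed is to replace the isomorphisms $X\to[\bigcup_{n\in N_\gamma}X_n]$ by the pairs $(U_\gamma,V_\gamma)$ above, which satisfy $V_\gamma U_\gamma=I_X$ and $\operatorname{ran}U_\gamma\subset[\bigcup_{n\in N_\gamma}X_n]$. The point to check carefully is that the original argument never uses surjectivity of these isomorphisms onto the block spans. It uses only the factorization $I_X=V_\gamma P_{N_\gamma}U_\gamma$ and the disjointness of supports of vectors in the ranges of the $P_{N_\gamma}$, both of which persist under $(\#')$.
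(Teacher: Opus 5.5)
Your Step 1 is exactly the paper's argument: the factorization $\mathrm{id}_X = V_\gamma P_{N_\gamma} U_\gamma$ (the paper's $S P_{N_\gamma} T$ with $S = T^{-1} Q P_{N_\gamma}$) forces $\Phi(P_{N_\gamma}) \neq 0$, and, as in the paper, everything else is inherited verbatim from the proof of \cite[Theorem~1.4]{johnson:phillips:schechtman:2022}, so the proposal is correct and follows the same route. One caution: the paper treats this non-vanishing, i.e.\ \cite[Proposition~1.3]{johnson:phillips:schechtman:2022}, stated for any nonzero homomorphism onto a Banach algebra and proved by the same one-line factorization, as the sole use of $(\#)$, so your Step 2 sketch should be replaced by that citation, since it is neither needed nor justified as written (uniform bounds on $\theta(U_{\gamma_i}),\theta(V_{\gamma_i})$ and a $k^{1/p}$ lower bound built from separate vectors $y_i$).
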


We prove Theorem~\ref{suf cond of SHAI property, var}. As in the proof of Theorem~\ref{suf cond for SHAI} (see the proof of \cite[Theorem~1.4]{johnson:phillips:schechtman:2022}), the only point at which the isomorphism appearing in the definition of property $(\#)$,
\[
X \cong \Big[ \bigcup_{n \in N_{\gamma}} X_n \Big], \qquad \gamma < \mathfrak{c},
\]
is used is in proving that, whenever $\Phi : \mathcal{L}(X) \to \mathcal{A}$ is a nonzero homomorphism onto a Banach algebra $\mathcal{A}$, the element $\Phi(P_{N_\gamma})$ is a nonzero idempotent in $\mathcal{A}$ for every $\gamma < \mathfrak{c}$ ( an element $p \in \mathcal{A}$ is called an idempotent if $p^{2}=p$ ). This is precisely the content of \cite[Proposition~1.3]{johnson:phillips:schechtman:2022}, which is subsequently used in the proof of \cite[Theorem~1.4]{johnson:phillips:schechtman:2022}. The remainder of the proof of \cite[Theorem~1.4]{johnson:phillips:schechtman:2022} does not rely on the above isomorphism from property $(\#)$. Therefore, once we establish that property $(\#')$ likewise implies that $\Phi(P_{N_\gamma})$ is a nonzero idempotent in $\mathcal{A}$ for every $\gamma < \mathfrak{c}$, Theorem~\ref{suf cond of SHAI property, var} follows immediately, since the rest of the argument is identical to the proof of Theorem~\ref{suf cond for SHAI} (see the proof of \cite[Theorem~1.4]{johnson:phillips:schechtman:2022}).

Therefore, the proof of Theorem \ref{suf cond of SHAI property, var} is reduced to establishing the following proposition.

\begin{prop} Let $X$ be a Banach space. Suppose that $(X_{n})_{n=1}^{\infty}$ is an unconditional FDD of the space $X$. Assume that $(X_{n})_{n=1}^{\infty}$ has property $(\#^{\prime})$, witnessed by an almost disjoint family $\lbrace N_{\gamma}: \gamma < \mathfrak{c} \rbrace$ of infinite subsets of $\mathbb{N}$. Suppose that $\Phi : \mathcal{L}(X) \to \mathcal{A}$ is a nonzero homomorphism onto a Banach algebra $\mathcal{A}$. Then, for every \(\gamma<\mathfrak{c}\), \(\Phi(P_{N_\gamma})\) is a nonzero idempotent in $\mathcal{A}$.
\end{prop}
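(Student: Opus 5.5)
Idempotence is immediate: $P_{N_\gamma}$ is a projection, so $P_{N_\gamma}^2=P_{N_\gamma}$, and since $\Phi$ is multiplicative, $\Phi(P_{N_\gamma})^2=\Phi(P_{N_\gamma}^2)=\Phi(P_{N_\gamma})$. The content of the statement is therefore that $\Phi(P_{N_\gamma})\neq 0$. The plan is to show that the identity $I_X$ factors through $P_{N_\gamma}$, i.e.\ there are bounded operators $A,B\in\mathcal{L}(X)$ with $I_X = B P_{N_\gamma} A$.

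To build $A$ and $B$, write $Y_\gamma=[\bigcup_{n\in N_\gamma}X_n]$, the range of $P_{N_\gamma}$. By property $(\#^{\prime})$ there are a complemented subspace $Z\subseteq Y_\gamma$, a bounded projection $Q:Y_\gamma\to Y_\gamma$ onto $Z$, and an isomorphism $T:X\to Z$.
\begin{itemize}
\item Let $A=\iota\circ T$, where $\iota:Y_\gamma\to X$ is the inclusion. Then $A\in\mathcal{L}(X)$ and $A$ takes values in $Y_\gamma$, so $P_{N_\gamma}A=A$.
\item Let $B=T^{-1}\circ Q\circ P_{N_\gamma}$, regarded as an operator $X\to X$. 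It is bounded because it is a composition of bounded maps: $P_{N_\gamma}:X\to Y_\gamma$, then $Q:Y_\gamma\to Z$, then $T^{-1}:Z\to X$.
\end{itemize}
For $x\in X$ we have $Ax=Tx\in Z\subseteq Y_\gamma$. Hence $P_{N_\gamma}Ax=Tx$ and $QTx=Tx$, so
\[
BP_{N_\gamma}Ax = T^{-1}QP_{N_\gamma}Tx = T^{-1}Tx = x .
\]
This gives $I_X=BP_{N_\gamma}A$.

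Now suppose, for contradiction, that $\Phi(P_{N_\gamma})=0$. Applying $\Phi$ to the factorization gives
\[
\Phi(I_X)=\Phi(B)\Phi(P_{N_\gamma})\Phi(A)=0 .
\]
Then for every $S\in\mathcal{L}(X)$ we get $\Phi(S)=\Phi(S I_X)=\Phi(S)\Phi(I_X)=0$, so $\Phi=0$, contradicting the hypothesis that $\Phi$ is nonzero. Hence $\Phi(P_{N_\gamma})\neq0$.

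No real obstacle is expected. The only point needing care is that the complementation in $(\#^{\prime})$ takes place inside $Y_\gamma$. Composing $Q$ with $P_{N_\gamma}$ turns it into a bounded operator on all of $X$, which is exactly why complemented copies suffice in place of the isomorphism demanded by $(\#)$.
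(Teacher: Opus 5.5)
Your proof is correct and is essentially the paper's argument: the paper uses the same operator $S=T^{-1}QP_{N_\gamma}$, obtains the factorization $SP_{N_\gamma}T=\mathrm{id}_X$, and applies $\Phi$. The only difference is the last step. The paper uses surjectivity of $\Phi$ to identify $\Phi(\mathrm{id}_X)$ with the unit of $\mathcal{A}$, whereas you get the contradiction directly from $\Phi\neq 0$, which is slightly cleaner and does not need surjectivity.
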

\begin{proof}
We prove only that \(\Phi(P_{N_\gamma})\) is nonzero, since the fact that \(\Phi(P_{N_\gamma})\) is idempotent follows immediately from the multiplicativity of the algebra homomorphism \(\Phi\) and the identity $P_{N_\gamma}^2=P_{N_\gamma}$. Indeed, from property $(\#^{\prime})$ we have that $X$ is isomorphic to a complemented subspace of  
\[
\Big[ \bigcup_{n \in N_{\gamma}} X_n \Big].
\]
Denote $Z$ the complemented isomorphic copy of $X$ in the above closed linear span, and denote $T: X \to Z$ the isomorphism and $Q : X \to Z$ the projection onto $Z$. Let $S = T^{-1}QP_{N_\gamma} : X \to X$. Note that $SP_{N_\gamma}T  = \mathrm{id}_{X}$, where  $\mathrm{id}_{X}$ is the identity operator on $X$. We have that $\Phi(S) \Phi(P_{N_{\gamma}}) \Phi(T) = \Phi(\mathrm{id}_{X})$ and since $\Phi$ is surjective we obtain that $\Phi(\mathrm{id}_{X})$ is the identity element of $\mathcal{A}$. Hence, $\Phi(P_{N_{\gamma}})$ cannot be the zero element of $\mathcal{A}$.
\end{proof}

\subsection{Haar system notation}

In this subsection, we recall the standard notation for dyadic intervals, the Haar system, and finite-dimensional $L_p$ spaces.

\begin{ntt}
A dyadic interval of $[0,1]$ is a subinterval of $[0,1]$ of the form 
\begin{align*}
\Big[ \frac{i-1}{2^n}, \frac{i}{2^n} \Big),     
\end{align*}
where $n \in \mathbb{N}_0$ and $1 \leq i \leq 2^n$. We denote by $\mathcal{D}$ the family of all dyadic intervals of $[0,1]$. If $I \in \mathcal{D}$, we denote by $I^{+}$ and $I^{-}$ the left and right half of $I$ respectively, both are elements of $\mathcal{D}$, that is if 
\begin{align*}
I = \Big[ \frac{i-1}{2^n}, \frac{i}{2^n} \Big),     
\end{align*}
for some $n \in \mathbb{N}_0$ and $1 \leq i \leq 2^n$, then
\begin{align*}
I^{+} = \Big[ \frac{2(i-1)}{2^{n+1}}, \frac{2i-1}{2^{n+1}} \Big)\; \text{and}\; I^{-} = \Big[ \frac{2i-1}{2^{n+1}}, \frac{2i}{2^{n+1}} \Big).   
\end{align*}
For $n \in \mathbb{N}_0$, denote by $\mathcal{D}^{n}$ the set of all dyadic intervals of $\mathcal{D}$ with length $1/2^n$, that is 
\begin{align*}
\mathcal{D}^n = \Big \lbrace  \Big[ \frac{i-1}{2^n}, \frac{i}{2^n} \Big): 1 \leq i \leq 2^n  \Big\rbrace,    
\end{align*}
and by $\mathcal{D}_n$ the set of all dyadic intervals of $\mathcal{D}$ with length greater than or equal to $1/2^n$, that is
\begin{align*}
\mathcal{D}_n = \bigcup_{k=0}^{n} \mathcal{D}^{k}. 
\end{align*}
\end{ntt}

\begin{ntt}
For every $I \in \mathcal{D}$, denote 
\begin{align*}
h_I = \chi_{I^{+}} - \chi_{I^{-}}.   
\end{align*}
The standard Haar system of $L_p$ is the family $(\chi_{[0,1]}) \cup(h_{I})_{I \in \mathcal{D}
}$. The aforementioned family forms an unconditional basis of $L_p$ by the Paley–Burkholder inequality, and thus, 
\begin{align*}
L_p = \Big[(\chi_{[0,1]}) \cup(h_{I})_{I \in \mathcal{D}
}  \Big].    
\end{align*}
Denote by $L_{p}^{0}$ the subspace of $L_p$ consisting of all mean-zero random variables, that is
\begin{align*}
L_{p}^{0} = \Big[ h_{I}: I \in \mathcal{D} \Big].    
\end{align*}
With regard to the finite-dimensional $L_p$ spaces, for every $n \in \mathbb{N}$, we denote
\begin{align*}
L_{p}^{n}  = \Big< \chi_{I}: I \in \mathcal{D}^{n} \Big>  = \Big< (\chi_{[0,1]}) \cup (h_I)_{I \in \mathcal{D}_{n-1}} \Big>   
\end{align*}
and denote by $L_p^{n,0}$ the subspace of $L_p^n$ consisting of all random variables with expectation zero; that is,
\begin{align*}
L_{p}^{n,0}  = \Big< h_I : I \in \mathcal{D}_{n-1}\Big>.   
\end{align*}
\end{ntt}

\section{The Bourgain-Rosenthal-Schechtman  $R_{\alpha}^{p}$ spaces } \label{recal BRS spaces}

In this section, we recall the construction of the Bourgain-Rosenthal-Schechtman spaces $R_{\alpha}^{p}$. Originally introduced in \cite{bourgain:rosenthal:schechtman:1981}, these spaces are realized as subspaces of $L_p(\lbrace 0,1 \rbrace^{T_\alpha})$, where $T_\alpha$ is a countable index set associated with the ordinal $\alpha$. The spaces $R_{\alpha}^{p}$ are defined recursively over the countable ordinals. The initial space $R_{0}^{p}$ consists of the constant functions. At successor ordinals, the construction is based on disjoint sums, whereas at limit ordinals, the corresponding spaces are obtained through independent sums.

For the purposes of this paper, we work with an equivalent construction of the spaces $R_{\alpha}^{p}$ as subspaces of $L_p=L_p[0,1]$, introduced in \cite{konstantos:motakis:brs:2025}. In this construction, disjoint sums are represented by compressions with disjointly supported ranges, while independent sums are implemented by distributional embeddings with independent ranges. We will also consider the subspaces $R_{\alpha}^{p,0}$ consisting of those random variables in $R_{\alpha}^{p}$ having zero expectation. Finally, we recall the result from \cite{konstantos:motakis:brs:2025} that every distributional copy of $R_{\alpha}^{p,0}$ is complemented in $L_p$.

The operators used in the successor and limit steps of the recursive construction of the spaces $R_{\alpha}^{p}$ as subspaces of $L_p=L_p[0,1]$ belong to a more general class of operators, which we call $\theta$-compressions. See the comment preceding \cite[Definition 3.4]{konstantos:motakis:brs:2025} for the terminology associated with this class of operators.

\begin{dfn} Let $0< \theta \leq 1$, and $X,Y$ be subspaces $L_p$. We say that a linear operator $T: X \to Y$ is a $\theta$-compression if for all random variables $f \in X$ and Borel sets $A \in \mathcal{B}(\mathbb{R})$ we have the equation 
\begin{align*}
    | [ Tf \in A ] | = \theta |[ f \in A ]| + (1-\theta) \chi_{A}(0)
\end{align*}
Equivalently, 
\begin{align*}
\mathrm{dist}(Tf) = \theta \mathrm{dist}(f) + (1 - \theta) \delta_{0}.
\end{align*}
\end{dfn}

We record two basic properties of $\theta$-compressions. First, the class of $\theta$-compressions is closed under composition, with the corresponding parameters multiplying. More precisely, let $T:X\to Y$ be a $\theta$-compression and let $S:Y\to Z$ be an $\eta$-compression, where $0<\theta,\eta\leq 1$ and $X,Y,Z$ are subspaces of $L_p$. Then $ST:X\to Z$ is a $\theta\eta$-compression. We also record the norm of a $\theta$-compression. In particular, every $\theta$-compression $T:X\to Y$ satisfies $\|Tf\|_p=\theta^{1/p}\|f\|_p$, for all $f\in X$.

For the construction of the successor spaces $R_{\alpha}^{p}$, we use a specific class of $\theta$-compressions, namely $\frac12$-compressions with disjointly supported ranges. For the concept of operators with disjointly supported ranges see \cite[Definition 6.4 and Example 6.5]{konstantos:motakis:brs:2025}. More precisely, these are the operators $T_{[0,1/2)}$ and $T_{[1/2,1)}$, which are particular instances of the following operator.

\begin{ntt} \label{example for compression part b}  Let $I = [a,b)$ be a subinterval of $[0,1]$ with $a < b$. We denote by $T_{I}: L_p \to L_p$ the operator defined by 
\begin{align*}
T_{I}(f)(t) =
\begin{cases}
f(\frac{t-a}{b-a})&: t \in I \\
0&: t \notin I
\end{cases} 
.
\end{align*}
Note that $T_I$ is a $(b-a)$-compression. By \cite[Example 6.5]{konstantos:motakis:brs:2025}, if $I$ and $J$ are disjoint intervals of the above form, then the operators $T_I$ and $T_J$ have disjointly supported ranges.
\end{ntt}

For the construction of the limit spaces $R_{\alpha}^{p}$, we use a specific class of $\theta$-compressions, namely distributional embeddings with independent ranges. Recall that a linear operator $T: X \to Y$, where $X,Y$ are subspaces of $L_p$, is a distributional embedding if it preserves the distribution of every element of $X$, that is for every $f \in X$
\begin{align*}
\mathrm{dist}(Tf) = \mathrm{dist}(f).    
\end{align*}
In this case, we say that the space $X$ distributionally embeds into the space $Y$. Note that $1$-compressions are precisely distributional embeddings. We say that a family of operators $(T_a)_{a\in A}$, where $T_a:L_p\to L_p$, has independent ranges if, for every family $(f_a)_{a\in A}$ with $f_a\in L_p$, the corresponding family $(T_af_a)_{a\in A}$ consists of independent random variables.

For each countable limit ordinal, we fix a family of distributional embeddings with independent ranges.

 \begin{ntt} \label{ntt for independent case}  For each countable limit ordinal $\alpha$, we fix a family of independent measure-preserving functions 
 \begin{align*}
 \phi_{\beta}^{\alpha}: ([0,1], \mathcal{B}([0,1]) \to  ([0,1], \mathcal{B}([0,1]),\;\; \beta < \alpha.  
 \end{align*}
 For every $\beta< \alpha$, denote $T_{\beta}^{\alpha}: L_p \to L_p$ the operator defined by
 \begin{align*}
 T_{\beta}^{\alpha} (f) = f \circ \phi_{\beta}^{\alpha}.    
 \end{align*}
 Observe that the collection $(T_{\beta}^{\alpha})_{\beta< \alpha}$ consists of distributional embeddings with independent ranges.
 \end{ntt}
 
We are now in a position to recall the recursive definition of the spaces $R_{\alpha}^{p}$. See also \cite[Definition 3.9]{konstantos:motakis:brs:2025}.

\begin{dfn} \label{equiv def of BRS spaces}
Let $R_{0}^{p} = \langle \chi_{[0,1]} \rangle$. Let $\alpha$ be an ordinal with $0 < \alpha < \omega_1$ and suppose $R_{\beta}^{p}$  has been defined for all $\beta<\alpha$. If $\alpha = \beta+1$, let
\begin{align*}
R_{\alpha}^{p} = \Big[ T_{[0,\frac{1}{2})} ( R_{\beta}^{p}) \bigcup  T_{ [\frac{1}{2},1) } ( R_{\beta}^{p} ) \Big].
\end{align*}
If $\alpha$ is a limit ordinal, let
\begin{align*}
R_{\alpha}^{p} = \left[ \bigcup_{\beta < \alpha} T_{\beta}^{\alpha}(R_{\beta}^{p})\right].
\end{align*}
\end{dfn}

The above definition is independent of the particular choice of operators made at each step of the recursive construction. See \cite[Remark 3.10]{konstantos:motakis:brs:2025}.

Throughout the remainder of the paper, we primarily work with the spaces $R_{\alpha}^{p,0}$, namely, the subspaces of $R_{\alpha}^{p}$ consisting of all mean-zero random variables.

\begin{ntt}
For every $\alpha < \omega_1$, denote 
\begin{align*}
R_{\alpha}^{p,0} = \lbrace f \in R_{\alpha}^{p}: \mathbb{E}(f) = 0 \rbrace.    
\end{align*}
\end{ntt}

The following remark describes the recursive structure of the spaces $R_{\alpha}^{p,0}$. This structure will be useful throughout the remainder of the paper, as the construction of their unconditional FDDs reflects the recursive nature of the spaces $R_{\alpha}^{p,0}$. See also \cite[Remark 3.12]{konstantos:motakis:brs:2025}.

\begin{rem}  \label{rem for the mean zero BRS spaces } Let $\alpha < \omega_{1}$.  
\begin{enumerate}[label=(\alph*)]  
\item  If $\alpha = n$, then 
\begin{align*}
R_{n}^{p,0} = L_{p}^{n,0}.
\end{align*}

\item  If $\alpha = \beta + n$, then 
\begin{align*}
R_{\alpha}^{p,0} = \Big[ L_{p}^{n,0} \cup \bigcup_{I \in \mathcal{D}^{n}} T_{I} (R_{\beta}^{p,0}) \Big].   
\end{align*}

\item  If $\alpha$ is a limit ordinal, then 
\begin{align*}
R_{\alpha}^{p,0} = \left[ \bigcup_{\beta < \alpha} T_{\beta}^{\alpha}(R_{\beta}^{p,0})\right].
\end{align*}

\end{enumerate}
\end{rem}

The following remark will be used in the proof of the main theorem of this paper, Theorem~\ref{main result}. It states that the distributional image of $R_{\alpha}^{p}$ or $R_{\alpha}^{p,0}$ is complemented in $L_{p}$. For the convenience of the reader, we briefly outline the main steps of the proof and provide the appropriate references.

\begin{rem} \label{crucial remark for orth compl}
Let $\alpha < \omega_1$. In \cite[Theorem 3.3]{konstantos:motakis:brs:2025}, we proved that the space $R_{\alpha}^{p}$ is orthogonally complemented in $L_p$, that is, its complementability in $L_p$ is witnessed by an orthogonal projection that is bounded with respect to $\|\cdot\|_p$. In \cite[Remark 3.14]{konstantos:motakis:brs:2025}, we observed that the same conclusion holds for the space $R_{\alpha}^{p,0}$. Furthermore, \cite[Proposition 2.12(b)]{konstantos:motakis:brs:2025} shows that orthogonal complementability is preserved under distributional isomorphisms. Consequently, if $T \colon R_{\alpha}^{p,0} \to L_p$ is a distributional embedding, then the subspace $T(R_{\alpha}^{p,0})$ is orthogonally complemented in $L_p$. In the proof of Theorem~\ref{main result}, however, we only require that $T(R_{\alpha}^{p,0})$ be complemented in $L_p$.
\end{rem}

\section{Schauder decompositions of $R_{\alpha}^{p,0}$} \label{Section for S-decompostion}

In this section, we recall the construction of the unconditional FDDs for the spaces $R_{\alpha}^{p,0}$ from \cite{konstantos:motakis:brs:2025}, which will be used throughout the paper. For a detailed explanation of this construction, together with illustrative examples, we refer the reader to \cite[Section 4]{konstantos:motakis:brs:2025}.

In \cite{konstantos:motakis:brs:2025}, the authors constructed, for every $\alpha < \omega_1$, an unconditional FDD for the space $R_{\alpha}^{p,0}$. More precisely, for every ordinal $\alpha < \omega_{1}$, there exists a countable well-founded tree $\mathcal{T}_{\alpha}$. For each $\lambda \in \mathcal{T}_{\alpha}$, there exist a positive integer $\kappa(\lambda)$, a parameter $\theta_{\lambda} \in (0,1]$, and a corresponding $\theta_{\lambda}$-compression $T_{\lambda}$, which in turn define the space
$X_{\lambda} = T_{\lambda}(L_{p}^{\kappa(\lambda),0})$. The collection $(X_\lambda)_{\lambda \in \mathcal{T}_{\alpha}}$ forms an unconditional FDD of $R_{\alpha}^{p,0}$.

\subsection{The trees $\mathcal{T}_\alpha$}

For every $1 \leq \alpha < \omega_1$, there exists a countable well-founded tree $\mathcal{T}_\alpha$ that indexes the unconditional FDD of the space $R_{\alpha}^{p,0}$. 

The construction of these trees proceeds recursively along the countable ordinals. To recall this construction, we will use the following notions: the integer part of an ordinal and the concatenation of two finite sequences.

\begin{dfn} For every ordinal $\alpha$ we define its integer part $\kappa (\alpha) \in \mathbb{N}_0$ as follows: Let $\kappa(0) = 0$. If $\alpha$ is a limit ordinal, let 
\begin{align*}
\kappa(\alpha) = 0.
\end{align*}
If $\alpha = \beta +1$, let 
\begin{align*}
\kappa(\alpha) = \kappa(\beta) +1.
\end{align*}
For example, if $n$ is a positive integer, then $\kappa (n) = n$, $\kappa(\omega) =0$ and $\kappa(\omega + n) = n$. When we write $\alpha=\beta+\kappa(\alpha)$, $\beta$ is the limit part of $\alpha$, and $\kappa(\alpha)$ is the integer part of $\alpha$.
\end{dfn}

\begin{dfn}
Let $\lambda=(x_1,\ldots,x_n)$ and $\mu=(y_1,\ldots,y_m)$ be finite sequences. Their concatenation is denoted by $\lambda^\frown\mu$ and is defined by
\[
\lambda^\frown\mu=(x_1,\ldots,x_n,y_1,\ldots,y_m).
\]
\end{dfn}

We are now ready to recall the construction of the trees $\mathcal{T}_{\alpha}$. See also \cite[Definition 4.3]{konstantos:motakis:brs:2025}. An equivalent definition, which provides an explicit description of the elements of $\mathcal{T}_{\alpha}$, is given in \cite[Definition 4.6 and Remark 4.7]{konstantos:motakis:brs:2025}.

\begin{dfn} \label{definition of the indexed trees} For $n \in \mathbb{N}$, let $\mathcal{T}_{n} = \lbrace (n) \rbrace$. Let $\alpha$ be an ordinal with $\alpha < \omega_1$ and suppose $\mathcal{T}_{\beta}$ has been defined for all $\beta<\alpha$. If $\alpha = \beta + \kappa(\alpha)$, let
\begin{align*}
\mathcal{T}_{\alpha} = \lbrace (\alpha) \rbrace \cup \lbrace (\alpha, I)^{\frown} \mu: I \in \mathcal{D}^{\kappa(\alpha)}, \; \mu \in \mathcal{T}_{\beta} \rbrace.    
\end{align*}
If $\alpha$ is a limit ordinal, let
\begin{align*}
\mathcal{T}_{\alpha} = \lbrace (\alpha)^{\frown} \mu: \mu \in \mathcal{T}_{\beta},\; \beta < \alpha \rbrace.    
\end{align*}
\end{dfn}    

Note that the tree associated with a limit ordinal is the disjoint union of the previously constructed trees.

\subsection{Operators and spaces indexed over $\mathcal{T}_\alpha$}

Following the recursive definition of the trees $\mathcal{T}_\alpha$, we recall the structure of the components $X_\lambda$ of the FDDs of the spaces $R_{\alpha}^{p,0}$ associated with the elements $\lambda \in \mathcal{T}_\alpha$.

Indeed, recursively on $1 \leq \alpha < \omega_1$, for every $\lambda \in \mathcal{T}_{\alpha}$, we define a positive integer $\kappa(\lambda)$, a number $0< \theta_{\lambda} \leq 1$ and a $\theta_\lambda$-compression $T_{\lambda} \colon L_{p} \to L_{p}$ as follows. See also \cite[Definition 4.10]{konstantos:motakis:brs:2025}.

\begin{dfn} \label{definition of T_lampda} For $n \in \mathbb{N}$, if $\lambda = (n) \in \mathcal{T}_{n}$, let 
\begin{align*}
\kappa (\lambda) = n, \;\; \theta_{\lambda} = 1,\;\; \text{and}\;\; T_{\lambda} = \mathrm{id}.
\end{align*}
Let $\alpha$ be an ordinal with $\alpha < \omega_1$ and suppose that for every $\lambda \in \mathcal{T}_{\beta}$, $\kappa(\lambda)$, $0< \theta_{\lambda} \leq 1$ and a $\theta_\lambda$-compression $T_{\lambda} \colon L_{p} \to L_{p}$ have been defined for all $\beta<\alpha$.
If $\alpha = \beta + \kappa(\alpha)$ is an infinite successor ordinal and $\lambda \in \mathcal{T}_{\alpha}$, then
\begin{enumerate}[label=(\alph*)]  
\item  if $\lambda = (\alpha)$, let 
\begin{align*}
\kappa (\lambda) = \kappa (\alpha),\;\; \theta_{\lambda} = 1,\;\; \text{and}\;\; T_{\lambda} = \mathrm{id},
\end{align*}

\item if $\lambda = (\alpha, I)^{\frown} \mu$, where $I \in \mathcal{D}^{\kappa(\alpha)}$ and $\mu \in \mathcal{T}_{\beta}$, let 
\begin{align*}
\kappa (\lambda) = \kappa (\mu),\;\; \theta_{\lambda} = \vert I \vert \theta_{\mu}\;\; \text{and}\;\; T_{\lambda} = T_{I} T_{\mu}.
\end{align*}
\end{enumerate}
Let $\alpha$ be a limit ordinal. If $\lambda  \in \mathcal{T}_{\alpha}$, then $\lambda = (\alpha)^{\frown} \mu$, where $\mu\in\mathcal{T}_\beta$ for some $1\leq \beta<\alpha$. Let 
\begin{align*}
\kappa(\lambda) = \kappa (\mu),\;\; \theta_{\lambda} = \theta_{\mu}\;\;  \text{and}\;\; T_{\lambda} = T_{\beta}^{\alpha} T_{\mu}. 
\end{align*}
\end{dfn}

We are now prepared to recall the components of the FDDs. Each component is a compressed copy of a finite dimensional $L_p$ space.

\begin{dfn} \label{def of Xlambda}
Let $\alpha < \omega_{1}$. For every $\lambda \in \mathcal{T}_{\alpha}$, define
\begin{align*}
X_{\lambda} = T_{\lambda}(L_p^{\kappa(\lambda),0}). 
\end{align*}
\end{dfn}

Having defined the above quantities recursively, we are now ready to describe the recursive structure of the components of the FDDs of the spaces $R_{\alpha}^{p,0}$. The following remark describes the relation between the components of the FDD of $R_{\alpha}^{p,0}$ and those associated with the preceding ordinals, which will be used throughout the remainder of the paper. See  also \cite[Remark 4.15]{konstantos:motakis:brs:2025}.

\begin{rem} \label{recursive structure of Xlambda} Let $1 \leq \alpha < \omega_1$.
\begin{enumerate}[label=(\alph*)]
\item If $\alpha = n$, then 
\begin{align*}
\theta_{(n)} = 1,\;\; T_{(n)} = \mathrm{id},\;\; \text{and}\;\; X_{(n)} = L_{p}^{n,0}.
\end{align*}

\item If $\alpha = \beta + \kappa(\alpha)$ is an infinite successor ordinal and $\lambda \in \mathcal{T}_{\alpha}$, then 
\begin{enumerate}[label=(\roman*)]
\item if $\lambda = (\alpha)$, then 
\begin{align*}
\theta_\lambda=1,\;\; T_\lambda  = \mathrm{id}\;\; \text{and}\;\; X_{\lambda} = L_{p}^{\kappa(\lambda),0},
\end{align*}
\item if $\lambda = (\alpha, I)^{\frown}\mu$, where $I \in \mathcal{D}^{\kappa(\alpha)}$ and $\mu \in \mathcal{T}_{\beta}$, then
\begin{align*}
\theta_\lambda = |I|\theta_\mu,\;\; T_{\lambda} = T_{I} T_{\mu}\;\; \text{and}\;\; X_{\lambda} = T_{I}(X_{\mu}).
\end{align*}
\end{enumerate}

\item If $\alpha$ is a limit ordinal and $\lambda \in \mathcal{T}_{\alpha}$, then $\lambda = (\alpha)^{\frown}\mu$, where $\mu \in \mathcal{T}_{\beta}$ for some $\beta < \alpha$. Then 
\begin{align*}
\theta_\lambda = \theta_\mu,\;\; T_{\lambda} = T_{\beta}^{\alpha} T_{\mu}\;\; \text{and}\;\; X_{\lambda} = T_{\beta}^{\alpha}(X_{\mu}).
\end{align*}

\end{enumerate}
\end{rem}

\subsection{The unconditional finite-dimensional decompositions of $R_\alpha^{p,0}$}

In this subsection, we recall the unconditional FDDs of the spaces $R_{\alpha}^{p,0}$ and provide the relevant results from \cite{konstantos:motakis:brs:2025} for completeness.

The following theorem can be obtained immediately from \cite[Proposition 4.16]{konstantos:motakis:brs:2025} and \cite[Theorem 4.17]{konstantos:motakis:brs:2025}.

\begin{thm} Let $\alpha< \omega_1$. The collection $(X_\lambda)_{\lambda \in \mathcal{T}_{\alpha}}$ forms an unconditional FDD of the space $R_{\alpha}^{p,0}$. Thus, 
\begin{align*}
R_{\alpha}^{p,0} = \Big[ \bigcup_{\lambda \in \mathcal{T}_{\alpha}} X_{\lambda}  \Big].    
\end{align*}
    
\end{thm}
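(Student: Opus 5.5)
The theorem says that $(X_\lambda)_{\lambda\in\mathcal{T}_\alpha}$ is an unconditional FDD of $R_\alpha^{p,0}$. The paper states it as an immediate consequence of \cite[Proposition 4.16]{konstantos:motakis:brs:2025} and \cite[Theorem 4.17]{konstantos:motakis:brs:2025}. If I were to prove it directly, I would argue by transfinite induction on $\alpha$, following the recursive structure of Remark~\ref{rem for the mean zero BRS spaces } and Remark~\ref{recursive structure of Xlambda}. The claim splits into two parts at each $\alpha$:
\begin{enumerate}[label=(\roman*)]
\item the closed span of $\bigcup_{\lambda\in\mathcal{T}_\alpha}X_\lambda$ equals $R_\alpha^{p,0}$;
\item there is a constant $C_p$, independent of $\alpha$, such that $\|\sum_\lambda \epsilon_\lambda x_\lambda\|_p\le C_p\|\sum_\lambda x_\lambda\|_p$ for all finitely supported $x_\lambda\in X_\lambda$ and all signs $\epsilon_\lambda$.
\end{enumerate}
The uniformity of $C_p$ in (ii) is essential, because the limit step takes infinitely many earlier ordinals at once.

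\textbf{Base case.} For $\alpha=n$ we have $X_{(n)}=L_p^{n,0}=R_n^{p,0}$ and there is a single component, so there is nothing to prove.

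\textbf{Successor case $\alpha=\beta+\kappa(\alpha)$.} The components are $X_{(\alpha)}=L_p^{\kappa(\alpha),0}$ together with $T_I(X_\mu)$ for $I\in\mathcal{D}^{\kappa(\alpha)}$ and $\mu\in\mathcal{T}_\beta$.
\begin{itemize}
\item Spanning follows directly from Remark~\ref{rem for the mean zero BRS spaces }(b) and the induction hypothesis, since each $T_I$ is an isomorphism onto its range.
\item For unconditionality, write $x=g+\sum_I T_I f_I$ with $g\in L_p^{\kappa(\alpha),0}$ and $f_I\in R_\beta^{p,0}$. Then $g$ is measurable with respect to $\mathcal{D}^{\kappa(\alpha)}$, and each $T_If_I$ is supported on $I$ with conditional mean zero given $\mathcal{D}^{\kappa(\alpha)}$. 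So $g$ is the conditional expectation of $x$, and the pieces are martingale differences.
\item Flipping the sign of $g$ is therefore controlled by Burkholder's inequality. Sign changes inside each $T_If_I$ are controlled by the induction hypothesis, since disjoint supports let the $p$-th powers of norms add.
\end{itemize}

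\textbf{Limit case.} By Remark~\ref{rem for the mean zero BRS spaces }(c), $R_\alpha^{p,0}$ is the closed span of the independent mean-zero blocks $T_\beta^\alpha(R_\beta^{p,0})$.
\begin{itemize}
\item Spanning follows from the induction hypothesis together with the fact that each $T_\beta^\alpha$ is a distributional embedding.
\item For unconditionality, write $x=\sum_\beta T_\beta^\alpha f_\beta$ with the $f_\beta$ independent and mean zero. Flipping signs across different blocks is controlled by the unconditionality of independent mean-zero sums in $L_p$, $1<p<\infty$; Rosenthal's inequality gives this with constant depending only on $p$.
\end{itemize}

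\textbf{Main obstacle.} The hard step is sign changes \emph{within} blocks at a limit stage. Naively combining the two estimates would multiply constants at every limit along a branch, so the constant would not stay bounded in $\alpha$. To avoid this, I would not induct on the unconditional constant. Instead I would prove the stronger statement that, in a suitable ordering, the whole family $\bigcup_\lambda X_\lambda$ consists of martingale differences with respect to one filtration on $[0,1]$.

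This filtration would be generated step by step by the dyadic refinements and the independent coordinates $\phi_\beta^\alpha$. With that in hand, Burkholder's inequality gives unconditionality with constant $p^*-1$ uniformly in $\alpha$, and this is essentially how I expect \cite[Theorem 4.17]{konstantos:motakis:brs:2025} proceeds. The delicate point is checking that the compressions $T_I$ and the maps $T_\beta^\alpha$ carry such filtrations into a common one.
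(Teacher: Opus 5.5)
The paper gives no argument of its own for this statement; it imports it from \cite[Proposition 4.16 and Theorem 4.17]{konstantos:motakis:brs:2025}. So your opening citation is exactly the paper's proof, and your inductive sketch is a genuinely different, self-contained route.

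You correctly see that the naive successor and limit estimates only give $\alpha$-dependent constants. Your remedy, a single filtration for the whole FDD, does work, and the ``delicate point'' closes as follows. Prove by induction that there are an enumeration $(\lambda_k)$ of $\mathcal{T}_\alpha$ and a filtration $(\mathcal{F}_k)_{k\ge 0}$, with $\mathcal{F}_0$ trivial, such that every $x\in X_{\lambda_k}$ is $\mathcal{F}_k$-measurable and satisfies $\mathbb{E}(x\mid\mathcal{F}_{k-1})=0$.

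\textbf{Successor step.} At $\alpha=\beta+n$, let $(\mu_k)$, $(\mathcal{G}_k)$ be the enumeration and filtration for $\mathcal{T}_\beta$. Take $\lambda_1=(\alpha)$ with $\mathcal{F}_1=\sigma(\mathcal{D}^n)$. Then process the blocks $T_I(X_{\mu_k})$ one interval at a time, using on each $I=[a,b)$ the rescaled $\sigma$-algebras $\{a+(b-a)G: G\in\mathcal{G}_k\}$. Since $I$ already lies in the filtration, $\mathbb{E}(T_Ig\mid\cdot)=T_I\,\mathbb{E}(g\mid\mathcal{G}_{k-1})=0$. If you instead put all $I\in\mathcal{D}^n$ in one time step, independent signs on different $I$ need Burkholder's transform with the predictable multipliers $\sum_I\epsilon_I\chi_I$, not just constant signs.

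\textbf{Limit step.} Interleave the countably many pulled-back filtrations $(\phi^\alpha_\beta)^{-1}(\mathcal{G}^\beta_k)$ by a diagonal enumeration that is increasing in $k$ for each fixed $\beta$. Independence of $(\phi^\alpha_\beta)_{\beta<\alpha}$ then gives $\mathbb{E}(g\circ\phi^\alpha_\beta\mid\mathcal{F}_{t-1})=\mathbb{E}(g\mid\mathcal{G}^\beta_{k-1})\circ\phi^\alpha_\beta=0$.

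Burkholder's inequality now yields the unconditional constant $p^*-1$ for every $\alpha$. This makes your separate disjointness and Rosenthal estimates unnecessary, and spanning follows from the recursive description of $R_\alpha^{p,0}$ as you say.

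Your route buys a self-contained proof with an explicit, $\alpha$-independent constant, and in fact a martingale-difference ordering of the whole FDD. The paper's citation buys brevity by deferring exactly these verifications to the cited work.
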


\section{The SHAI property of the $R_{\alpha}^{p}$ spaces} \label{section for SHAI of BRS}

The goal of this section is to prove that the spaces $R_{\alpha}^{p,0}$ have the SHAI property. The spaces $R_{n}^{p,0}$ (and $R_{n}^{p}$) have this property since they are finite dimensional. We therefore focus on the infinite-dimensional case. First, we show that for every $\omega \leq \alpha < \omega_1$ the space \(R_{\alpha}^{p,0}\) with its unconditional FDD has property \((\#^{\prime})\), and since it satisfies a finite disjoint lower estimate (see the comment following Remark~\ref{cotype and dis lower estimate}), Theorem \ref{suf cond of SHAI property, var} implies that \(R_{\alpha}^{p,0}\) has the SHAI property. Then the SHAI property of the infinite-dimensional space $R_{\alpha}^{p}$ is derived from the isomorphism $R_{\alpha}^{p} \cong R_{\alpha}^{p,0}$, as it is well-known, all infinite-dimensional complemented subspaces of $L_p$ are isomorphic to their hyperplanes.

In Theorem \ref{main result}, we prove that the spaces $R_{\alpha}^{p,0}$ satisfy property $(\#^{\prime})$. More precisely, we show, by induction on the countable ordinals, that for every $\omega \leq \alpha < \omega_{1}$, there exists an almost disjoint continuum $\lbrace M_\gamma(\alpha): \gamma < \mathfrak{c} \rbrace$ of infinite subsets of $\mathcal{T}_{\alpha}$ such that, for every $\gamma < \mathfrak{c}$, the space $R_{\alpha}^{p,0}$ distributionally embeds into
\begin{align*}
\Big[ X_{\lambda}: \lambda \in M_\gamma(\alpha) \Big],
\end{align*}
and thus, from Remark \ref{crucial remark for orth compl}, this distributional image of $R_{\alpha}^{p,0}$ is complemented in the aforementioned closed linear span. The almost disjoint continuum $\lbrace M_\gamma(\alpha): \gamma < \mathfrak{c} \rbrace$ is constructed recursively. To establish the distributional embedding of $R_{\alpha}^{p,0}$ into
\begin{align*}
\Big[ X_{\lambda}: \lambda \in M_\gamma(\alpha) \Big],
\end{align*}
we employ two different types of distributional embeddings: one for the initial case $\alpha=\omega$ and the general limit ordinal case, and another for the successor ordinal case.

We first describe the distributional embedding that underlies the inductive proof of Theorem \ref{main result} and is used to establish the base case and the limit ordinal case of the induction. The following is a standard result proved using the characteristic function of a random variable, which characterizes its distribution. 

\begin{prop} \label{distributional embedding for the limit case}
Let $(X_n)_{n=1}^\infty$ be a sequence of independent subspaces of $L_p$ and $(T_n:X_n\to L_p)_{n=1}^\infty$, be a sequence of distributional embeddings with independent ranges, that is the spaces $(T_n (X_{n}))_{n=1}^\infty$ are independent. Then, there exists a distributional embedding
\[T:[(X_n)_{n=1}^\infty]\to L_p\]
such that, for all $n\in  \mathbb{N}$,
\[T|_{X_n} = T_n.\]
\end{prop}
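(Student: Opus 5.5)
We define $T$ first on the algebraic span $\langle \bigcup_n X_n\rangle$, check there that it preserves distributions, and then extend by density. Throughout, ``the spaces $(X_n)$ are independent'' means that any choice $f_n\in X_n$ gives independent random variables, and similarly for the ranges $T_n(X_n)$.

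\emph{Step 1: well-definedness on the span.} Any $f\in\langle \bigcup_n X_n\rangle$ can be written as $f=\sum_{n=1}^N f_n$ with $f_n\in X_n$. We set $Tf=\sum_{n=1}^N T_nf_n$. To see that this is well defined, we need uniqueness of the decomposition. Suppose $\sum_{n\le N} f_n=0$. Since the $f_n$ are independent, the characteristic function of $0$ factors:
\[
1=\prod_{n\le N}\Phi_{f_n}(t)\quad\text{for all } t .
\]
Each factor has modulus at most $1$, so $|\Phi_{f_n}(t)|=1$ for all $t$. This forces each $f_n$ to be almost surely constant, say $f_n=c_n$. Then $\Phi_{T_nf_n}=\Phi_{f_n}$, so $T_nf_n=c_n$ as well, and $\sum_n T_nf_n=\sum_n c_n=0$. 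Hence $T$ is well defined, and it is clearly linear.

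\emph{Step 2: distribution preservation on the span.} For $f=\sum_{n\le N}f_n$ we use independence of the $f_n$, then $\mathrm{dist}(T_nf_n)=\mathrm{dist}(f_n)$, then independence of the ranges $T_n(X_n)$:
\[
\Phi_{f}(t)=\prod_{n\le N}\Phi_{f_n}(t)=\prod_{n\le N}\Phi_{T_nf_n}(t)=\Phi_{Tf}(t).
\]
Therefore $\mathrm{dist}(Tf)=\mathrm{dist}(f)$. In particular $\|Tf\|_p=\|f\|_p$, so $T$ is an isometry on the span.

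\emph{Step 3: extension to the closure.} Because $T$ is an isometry on the span, it extends uniquely to an isometry on $[(X_n)_{n=1}^\infty]$. To see that the extension still preserves distributions, take $f_k\to f$ in $L_p$ with $f_k$ in the span. Then $Tf_k\to Tf$ in $L_p$, and convergence in $L_p$ implies convergence in probability. This gives
\[
\Phi_{f_k}\to\Phi_f \quad\text{and}\quad \Phi_{Tf_k}\to\Phi_{Tf}
\]
pointwise, by dominated convergence applied to $e^{itf_k}$ along subsequences. Since $\Phi_{f_k}=\Phi_{Tf_k}$ for every $k$, we get $\Phi_f=\Phi_{Tf}$, so $T$ is a distributional embedding. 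By construction $T|_{X_n}=T_n$.

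The step that needs care is Step 1, because the sum $\bigcup_n X_n$ need not be direct when constants belong to several of the $X_n$. The modulus-one argument for characteristic functions handles this. When the spaces consist of mean-zero functions, which is the case in our applications, the issue is simpler: the decomposition is unique up to constants, and mean zero kills the constants.
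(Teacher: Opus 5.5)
Your proof is correct and follows the route the paper indicates: the paper omits the proof and only notes that the result is standard via characteristic functions. You define $T$ on the algebraic span, use that characteristic functions factor over independent sums, and extend by density. One simplification: the Step 2 computation, applied to any representation with $\sum_n f_n=0$, already gives $\Phi_{\sum_n T_nf_n}=\prod_n\Phi_{f_n}=\Phi_0\equiv 1$, so $\sum_n T_nf_n=0$, and the modulus-one argument in Step 1 is not needed for well-definedness.
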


We now describe the distributional embedding that underlies the inductive proof of Theorem \ref{main result} and is used to establish the successor ordinal case of the induction.

The proof of the aforementioned distributional embedding is based on characteristic functions of random variables. For this purpose, we establish the following remark.

\begin{rem} \label{easy remark} Let $0 \leq a < b \leq 1$ and $I = [a,b)$. For every bounded measurable function $\phi : \mathbb{R} \to \mathbb{R}$ and $f \in L_{p}$ we have, using a change of variables, that
\begin{align*}
\int_{I} \phi( T_{I}(f) (x)) \mathrm{d}x = \vert I \vert \int_{0}^{1} \phi( f(x)) \mathrm{d}x.
\end{align*}
    
\end{rem}

\begin{prop} \label{distributional embedding for the successor case}
Let $n \in \mathbb{N}$. For every $I \in \mathcal{D}^{n}$, let $X_{I}$ and $Y_{I}$ be subspaces of $L_{p}$ such that $X_{I}$ distributional embeds into $Y_{I}$. Then the space 
\begin{align*}
 \Big[ L_{p}^{n,0} \bigcup \bigcup_{I \in \mathcal{D}^{n}} T_{I} (X_{I})  \Big]   
\end{align*}
distributional embeds into 
\begin{align*}
 \Big[ L_{p}^{n,0} \bigcup \bigcup_{I \in \mathcal{D}^{n}} T_{I} (Y_{I})  \Big].   
\end{align*}
\end{prop}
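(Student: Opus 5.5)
For each $I \in \mathcal{D}^n$ fix a distributional embedding $S_I : X_I \to Y_I$. The plan is to define $S$ on the algebraic span by
\begin{align*}
S\Big( g + \sum_{I \in \mathcal{D}^{n}} T_I(f_I) \Big) = g + \sum_{I \in \mathcal{D}^{n}} T_I(S_I f_I), \qquad g \in L_p^{n,0},\ f_I \in X_I,
\end{align*}
show that it preserves distributions, and then extend it to the closed spans. Since a distribution-preserving linear map is an isometry in $\|\cdot\|_p$, the extension to the closures is automatic. Distributions are stable under $L_p$ limits, because convergence in $L_p$ implies convergence in distribution. Hence the extension is again a distributional embedding, and its range lies in the closed span of $L_p^{n,0}$ and the spaces $T_I(Y_I)$.

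The key step is the distributional identity. Fix $h = g + \sum_I T_I f_I$. The function $g \in L_p^{n,0}$ is constant on each $I \in \mathcal{D}^n$, say $g = \sum_I c_I \chi_I$. The function $T_I f_I$ is supported in $I$. So on each $I$ we have $h = c_I + T_I f_I$, and likewise for the image $Sh$ with $S_I f_I$ in place of $f_I$. I will compare characteristic functions. For $t \in \mathbb{R}$,
\begin{align*}
\Phi_h(t) = \sum_{I \in \mathcal{D}^n} \int_I e^{it(c_I + T_I f_I(x))}\,\mathrm{d}x = \sum_{I} e^{itc_I} |I| \int_0^1 e^{it f_I(x)}\,\mathrm{d}x = \sum_I e^{itc_I}|I|\,\Phi_{f_I}(t).
\end{align*}
The middle equality is Remark~\ref{easy remark}, applied separately to the real and imaginary parts with the bounded functions $\phi(s)=\cos(t(c_I+s))$ and $\phi(s)=\sin(t(c_I+s))$. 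The same computation gives $\Phi_{Sh}(t) = \sum_I e^{itc_I}|I|\,\Phi_{S_I f_I}(t)$. Since $\Phi_{S_I f_I} = \Phi_{f_I}$, we get $\Phi_h = \Phi_{Sh}$, so $\mathrm{dist}(h) = \mathrm{dist}(Sh)$.

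The main obstacle is checking that $S$ is well defined on the algebraic span, because the representation of $h$ may not be unique. For example, $X_I$ might contain constants, which would overlap with $L_p^{n,0}$ on $I$. I will deal with this using only the linearity of each $S_I$ together with the distributional identity. Suppose $g + \sum_I T_I f_I = 0$. The computation above, applied to the right-hand side, gives $\mathrm{dist}(g + \sum_I T_I S_I f_I) = \mathrm{dist}(0) = \delta_0$. Hence $g + \sum_I T_I S_I f_I = 0$, so the right-hand side vanishes too. Two representations of the same $h$ differ by a representation of $0$, so by linearity they have the same image, and $S$ is a well-defined linear map.

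A second, minor point is that $\Phi_{S_I f_I} = \Phi_{f_I}$ holds because $S_I$ is a distributional embedding.
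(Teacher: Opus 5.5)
Your proposal is correct and follows the same route as the paper: the same map $g+\sum_I T_I f_I \mapsto g+\sum_I T_I(S_I f_I)$, and the same characteristic-function computation via Remark~\ref{easy remark}, using that $g$ is constant on each $I\in\mathcal{D}^n$ and that the $T_I$ have disjointly supported ranges. Your additional checks are valid and make explicit what the paper leaves implicit: well-definedness when the decomposition is not unique (e.g.\ if some $X_I$ contains constants), and the extension from the linear span to the closed span via the isometry property and stability of distributions under $L_p$-limits.
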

\begin{proof}
For every $I \in \mathcal{D}^{n}$, denote $J_{I}: X_{I} \to Y_{I}$ a distributional embedding. We define a distributional embedding 
\begin{align*}
T: \Big[ L_{p}^{n,0} \bigcup \bigcup_{I \in \mathcal{D}^{n}} T_{I} (X_{I})  \Big]  \to \Big[ L_{p}^{n,0} \bigcup \bigcup_{I \in \mathcal{D}^{n}} T_{I} (Y_{I})  \Big]   
\end{align*}
as follows. Let $f \in \Big< L_{p}^{n,0} \bigcup \bigcup_{I \in \mathcal{D}^{n}} T_{I} (X_{I})  \Big>$ be arbitrary. We have $f = f_{0} + \sum_{I \in \mathcal{D}^{n}} T_{I}(f_{I})$, where $f_{0} \in L_{p}^{n,0}$ and $f_{I} \in X_{I}$, $I \in \mathcal{D}^{n}$. Define 
\begin{align*}
T(f) =  f_{0} + \sum_{I \in \mathcal{D}^{n}} T_{I}( J_{I} f_{I}).
\end{align*}
It is enough to show that $\mathrm{dist}(T(f)) = \mathrm{dist}(f)$. Indeed, we prove that $\Phi_{T(f)}(t) = \Phi_{f}(t)$, for all $t \in \mathbb{R}$. For every $I \in \mathcal{D}^{n}$, note that $f_{0}|_{I} = a_{I}$ is a constant. From the disjointness of the supports of the functions $T_{I}(J_I f_I)$, $I \in \mathcal{D}^{n}$, and  Remark \ref{easy remark}, we obtain 
\begin{align*}
\Phi_{T(f)}(t) &=  \int_{0}^{1} e^{it \Big(f_{0} + \sum_{I \in \mathcal{D}^{n}} T_{I}( J_{I} f_{I}) \Big)(x)  } \mathrm{d}x  = \sum_{I \in \mathcal{D}^{n}} \int_{I} e^{it \Big(a_{I} +  T_{I}( J_{I} f_{I}) \Big) (x) } \mathrm{d}x \\
& = \sum_{I \in \mathcal{D}^{n}}  \vert I \vert e^{it a_{I}}  \int_{0}^{1} e^{it \Big( J_{I} f_{I} \Big) (x) } \mathrm{d}x = \sum_{I \in \mathcal{D}^{n}}  \vert I \vert e^{it a_{I}}  \int_{0}^{1} e^{it (f_{I} ) (x) } \mathrm{d}x.
\end{align*} 
An analogous argument shows that 
\begin{align*}
\Phi_{f}(t)  = \sum_{I \in \mathcal{D}^{n}}  \vert I \vert e^{it a_{I}}  \int_{0}^{1} e^{it (f_{I} ) (x) } \mathrm{d}x,
\end{align*}
which completes the proof.
\end{proof}

Before proceeding to the proof of the main result of this paper, Theorem \ref{main result}, we require the following proposition, which will be used in the limit case of the inductive argument. It shows that a limit space $R_{\alpha}^{p,0}$ distributionally embeds into an appropriate subspace of itself.

\begin{prop} \label{embed of limit BRS space} \phantom{A}
 \begin{enumerate}[label=(\alph*)]
\item \label{embed of limit BRS space,1}  For every $\alpha<\omega_1$, the space $R_{\beta}^{p,0}$ distributionally embeds into $R_{\alpha}^{p,0}$ for every $\beta \leq \alpha$.

\item \label{embed of limit BRS space,2} Let $\alpha < \omega_{1}$ be a limit ordinal number and let $\Gamma$ be an infinite subset of $\mathbb{N}$. Assume that $(\beta_{n})_{n \in \Gamma}$ is an increasing cofinal sequence in $\alpha$, that is $\beta_{n} < \alpha$, for all $n \in \Gamma$ and $\sup_{n \in \Gamma} \beta_n = \alpha$. Then the space $R_{\alpha}^{p,0}$ distributionally embeds into 
\begin{align*}
\Big[\bigcup_{n \in \Gamma} T_{\beta_n}^{\alpha} (R_{\beta_n}^{p,0})  \Big].    
\end{align*}
 \end{enumerate}
\end{prop}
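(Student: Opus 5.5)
Part (a) goes by transfinite induction on $\alpha$, for a fixed $\beta$. Once $R_{\beta}^{p,0}$ distributionally embeds into $R_{\gamma}^{p,0}$ and $R_{\gamma}^{p,0}$ into $R_{\alpha}^{p,0}$, the composition is again a distributional embedding. It therefore suffices to establish two cases. In the successor case we show that $R_{\gamma}^{p,0}$ embeds into $R_{\gamma+1}^{p,0}$. In the limit case we show that $R_{\gamma}^{p,0}$ embeds into $R_{\alpha}^{p,0}$ for every $\gamma<\alpha$; here the embedding is simply $T_{\gamma}^{\alpha}$ restricted to $R_{\gamma}^{p,0}$, and by Remark~\ref{rem for the mean zero BRS spaces }(c) its range lies in $R_{\alpha}^{p,0}$.

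For the successor step we argue by induction on $\gamma$, with the hypothesis that $R_{\delta}^{p,0}$ embeds into $R_{\delta+1}^{p,0}$ for all $\delta<\gamma$. If $\gamma=\delta+n$ with $\delta$ a limit or $0$ and $n\geq 1$, then by Remark~\ref{rem for the mean zero BRS spaces }(b),
\[
R_\gamma^{p,0}=\Big[L_p^{n,0}\cup\bigcup_{I\in\mathcal D^n}T_I(R_\delta^{p,0})\Big],\qquad R_{\gamma+1}^{p,0}=\Big[L_p^{n+1,0}\cup\bigcup_{J\in\mathcal D^{n+1}}T_J(R_\delta^{p,0})\Big].
\]
For $n=0$, $\gamma$ is a limit ordinal. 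In that case $R_{\gamma+1}^{p,0}$ contains $[L_p^{1,0}\cup T_{[0,1/2)}(R^{p,0}_\gamma)\cup T_{[1/2,1)}(R^{p,0}_\gamma)]$. By Proposition~\ref{distributional embedding for the limit case}, $R_\gamma^{p,0}$ embeds into $[T^\gamma_{\delta}(R^{p,0}_{\delta}):\delta<\gamma]$, so I would rather prove the uniform statement directly. The map $f\mapsto f\circ\psi$, where $\psi$ is a measure-preserving map, is always a distributional embedding. The simplest route is to realize $R_\gamma^{p,0}$ distributionally inside $R_{\gamma+1}^{p,0}$ using the definition-independence of Remark~\cite[3.10]{konstantos:motakis:brs:2025}. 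That remark lets us present $R_{\gamma+1}^{p}$ via any disjoint compressions, and $R_\gamma^p$ appears as the mean-zero-on-$[0,1/2)$ part via $T_{[0,1/2)}$ combined with a constant adjustment. In practice the cleanest approach is the following: using Proposition~\ref{distributional embedding for the successor case} inductively (replacing $R_\delta^{p,0}$ by the larger space $R_{\delta+1}^{p,0}$), one reduces to the base case, which is either the finite-dimensional embedding $L_p^{n,0}\hookrightarrow L_p^{n+1,0}$ (for instance via the dyadic-refining measure-preserving map) or the limit case. The limit case is handled by the observation that $R_{\gamma+1}^p\supseteq$ a space distributionally equivalent to $R_\gamma^p$ through the independent-sum description. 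I expect this step, making the embedding $R_\gamma\hookrightarrow R_{\gamma+1}$ precise when $\gamma$ is a limit ordinal, to be the main technical obstacle.

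Part (b) is the substantive part. The plan is to construct, for each $\beta<\alpha$, a distributional embedding $S_\beta:T_\beta^\alpha(R_\beta^{p,0})\to T^\alpha_{\beta_{n(\beta)}}(R^{p,0}_{\beta_{n(\beta)}})$ with the $n(\beta)\in\Gamma$ pairwise distinct. Since $(\beta_n)_{n\in\Gamma}$ is cofinal and increasing, an injective choice $\beta\mapsto n(\beta)$ with $\beta\leq\beta_{n(\beta)}$ exists: enumerate the countably many $\beta<\alpha$ and, at each stage, pick an unused $n\in\Gamma$ with $\beta_n\geq\beta$. We set $S_\beta=T^\alpha_{\beta_{n(\beta)}}\circ E_\beta\circ(T^\alpha_\beta)^{-1}$, where $E_\beta:R^{p,0}_\beta\to R^{p,0}_{\beta_{n(\beta)}}$ comes from (a). The domains $T_\beta^\alpha(R_\beta^{p,0})$, $\beta<\alpha$, are independent, and the ranges lie in the independent spaces $T^\alpha_{\beta_n}(L_p)$ with distinct indices. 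Proposition~\ref{distributional embedding for the limit case}, applied after enumerating the $\beta$'s as a sequence, then glues the $S_\beta$ into a distributional embedding of $R_\alpha^{p,0}=[\bigcup_\beta T^\alpha_\beta(R^{p,0}_\beta)]$ (Remark~\ref{rem for the mean zero BRS spaces }(c)) into the required span.
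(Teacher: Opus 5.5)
Your part (b) is correct and follows the paper's argument: you choose pairwise distinct $n(\beta)\in\Gamma$ with $\beta\le\beta_{n(\beta)}$, transport the embeddings from (a) through $T^\alpha_\beta$ and $T^\alpha_{\beta_{n(\beta)}}$, and glue them with Proposition~\ref{distributional embedding for the limit case}. However, (b) depends on (a), and (a) has a genuine gap. The successor step $R^{p,0}_\gamma\hookrightarrow R^{p,0}_{\gamma+1}$ is never established when $\gamma$ is a limit ordinal. You flag this yourself as the ``main technical obstacle'', and none of the routes you sketch closes it. $T_{[0,1/2)}$ alone is only a $\tfrac12$-compression, so $\mathrm{dist}(T_{[0,1/2)}f)=\tfrac12\mathrm{dist}(f)+\tfrac12\delta_0$. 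Adding a constant on $[1/2,1)$ only moves that atom and cannot remove it for nonconstant $f$. The phrase ``through the independent-sum description'' is not an argument. Your reduction of $\gamma=\delta+n$ to $\gamma=\delta$ via Proposition~\ref{distributional embedding for the successor case} with $Y_I=R^{p,0}_{\delta+1}$ is sound, since $[L^{n,0}_p\cup\bigcup_{I\in\mathcal D^n}T_I(R^{p,0}_{\delta+1})]=R^{p,0}_{\gamma+1}$. But it funnels everything into exactly the case you leave open, so as written neither (a) nor (b) is proved.

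The missing observation is that one operator handles every $\gamma$ at once, limit or not. $T_{[0,1/2)}+T_{[1/2,1)}$ is the map $f\mapsto f\circ\psi$ with $\psi(t)=2t \bmod 1$. Since $\psi$ is measure preserving, this is a distributional embedding of all of $L_p$. By Definition~\ref{equiv def of BRS spaces} it sends $R^p_\gamma$ into $R^p_{\gamma+1}=[T_{[0,1/2)}(R^p_\gamma)\cup T_{[1/2,1)}(R^p_\gamma)]$. Because it preserves distributions it preserves expectation, so it maps $R^{p,0}_\gamma$ into $R^{p,0}_{\gamma+1}$. This is what the paper uses, citing \cite[Proposition 6.7]{konstantos:motakis:brs:2025}. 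Composing it with the inductive embedding $R^{p,0}_\beta\to R^{p,0}_\gamma$ gives the successor case, and the limit case is $T^\alpha_\beta$ as you say. Your inner induction and the case split on $\gamma=\delta+n$ then become unnecessary. The ``dyadic-refining'' embedding $L^{n,0}_p\to L^{n+1,0}_p$ you propose for the base case is just the restriction of this same map.
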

\begin{proof}  Let us prove the first assertion. We proceed by transfinite induction on $\alpha < \omega_{1}$. Fix $\alpha < \omega_1$. The base case is trivial. Suppose that $\alpha$ is a limit ordinal and $\beta < \alpha$. Then $T_{\beta}^{\alpha}: R_{\beta}^{p,0} \to R_{\alpha}^{p,0}$ is a distributional embedding (the codomain of $T_\beta^{\alpha}$ is $R_{\alpha}^{p,0}$ by Definition \ref{equiv def of BRS spaces}), which proves the limit case. Suppose that $\alpha = \gamma +1$ and let $\beta < \gamma +1$. We prove that $R_{\beta}^{p,0}$ distributionally embeds into $R_{\gamma + 1}^{p,0}$. By the induction hypothesis, $R_{\beta}^{p,0}$ distributionally embeds into $R_{\gamma}^{p,0}$. Let $J: R_{\beta}^{p,0} \to R_{\gamma}^{p,0}$ be a distributional embedding. We define a distributional embedding $T : R_{\beta}^{p,0} \to R_{\gamma+1}^{p,0}$ as follows. Let $f \in R_{\beta}^{p,0}$. Define
\begin{align*}
T(f) = (T_{[0,1/2)} + T_{[1/2,1)})(J(f)),
\end{align*}
(the codomain of $T$ is $R_{\gamma+1}^{p,0}$ by Definition \ref{equiv def of BRS spaces}). By \cite[Proposition 6.7]{konstantos:motakis:brs:2025}, the operator $T_{[0,1/2)} + T_{[1/2,1)}: R_{\gamma}^{p,0} \to R_{\gamma+1}^{p,0}$ is a distributional embedding. Thus, $T$ is a distributional embedding as a composition of two distributional embeddings. 

Now, we prove the second assertion. Choose a sequence $n(\beta)_{\beta < \alpha}$ in $\Gamma$ of pairwise distinct numbers such that $\beta < \beta_{n(\beta)}$. From \ref{embed of limit BRS space,1}, we have that for every $\beta < \alpha$, the space $R_{\beta}^{p,0}$ distributionally embeds into $R_{\beta_{n(\beta)}}^{p,0}$. Hence, for every $\beta < \alpha$, the space  $T_{\beta}^{\alpha} (R_{\beta}^{p,0})$ distributionally embeds into $T_{\beta_{n(\beta)}}^{\alpha} (R_{\beta_{n(\beta)}}^{p,0})$. For every $\beta< \alpha$, denote $J_{\beta}:  T_{\beta}^{\alpha} (R_{\beta}^{p,0}) \to T_{\beta_{n(\beta)}}^{\alpha} (R_{\beta_{n(\beta)}}^{p,0})$ such a distributional embedding. The spaces $(T_{\beta}^{\alpha} (R_{\beta}^{p,0}))_{\beta < \alpha}$ are independent, and the operators $(J_{\beta})_{\beta < \alpha}$ have independent ranges. Consequently, the second assertion follows from Proposition \ref{distributional embedding for the limit case}, which completes the proof.
\end{proof}

We now proceed to the proof of the main result of this work.

\begin{thm} \label{main result} Let $\omega \leq \alpha < \omega_{1}$. Then $R_{\alpha}^{p,0}$ has property $(\#^{\prime})$. Consequently, $R_{\alpha}^{p,0}$ has the SHAI property.
\end{thm}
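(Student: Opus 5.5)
We argue by transfinite induction on $\omega\le\alpha<\omega_1$. The claim at each stage is the existence of an almost disjoint family $\{M_\gamma(\alpha):\gamma<\mathfrak c\}$ of infinite subsets of the countable index set $\mathcal T_\alpha$ such that, for every $\gamma$, $R_\alpha^{p,0}$ distributionally embeds into $[X_\lambda:\lambda\in M_\gamma(\alpha)]$. By Remark \ref{crucial remark for orth compl}, the image of such an embedding is complemented in $L_p$, hence in that closed span. Since $\mathcal T_\alpha$ is countable, we may enumerate it as $\mathbb N$, and property $(\#')$ follows. The SHAI conclusion then comes from Theorem \ref{suf cond of SHAI property, var} together with the finite cotype remark following Remark \ref{cotype and dis lower estimate}.

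\emph{Limit case (including $\alpha=\omega$).} Here $\mathcal T_\alpha$ is the disjoint union of the copies $(\alpha)^\frown\mathcal T_\beta$, $\beta<\alpha$. Fix an increasing cofinal sequence $(\beta_n)_{n\in\mathbb N}$ in $\alpha$, and fix an almost disjoint continuum $\{\Gamma_\gamma:\gamma<\mathfrak c\}$ of infinite subsets of $\mathbb N$ (for instance, branches of the binary tree). Define
\[
M_\gamma(\alpha)=\{(\alpha)^\frown\mu:\ \mu\in\mathcal T_{\beta_n},\ n\in\Gamma_\gamma\}.
\]
Two distinct sets $M_\gamma(\alpha)$ and $M_{\gamma'}(\alpha)$ share the whole finite-dimensional block $(\alpha)^\frown\mathcal T_{\beta_n}$ for the finitely many $n\in\Gamma_\gamma\cap\Gamma_{\gamma'}$. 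This is finite only if each such $\mathcal T_{\beta_n}$ is finite, which fails when $\beta_n\ge\omega$.

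To repair this, we use the induction hypothesis inside each block. For $\alpha=\omega$ we take $\beta_n=n$, so each $\mathcal T_n=\{(n)\}$ is a singleton, the family is genuinely almost disjoint, and $[X_\lambda:\lambda\in M_\gamma(\omega)]=[T^\omega_n(L_p^{n,0}):n\in\Gamma_\gamma]$. Proposition \ref{embed of limit BRS space}\ref{embed of limit BRS space,2} then gives the embedding.

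For a general limit $\alpha>\omega$ with $\beta_n\ge\omega$, we make the choices inside blocks vary with $\gamma$. Index the continuum by branches $\sigma\in 2^{\mathbb N}$, and take $\Gamma_\sigma$ to be the set of codes of the initial segments $\sigma|n$. For the $n$-th element of $\Gamma_\sigma$, which is the code of $\sigma|n$, we select from the induction-hypothesis family for $\beta_n$ a set $M_{\xi(\sigma|n)}(\beta_n)$, where $\xi$ is injective on nodes. Two distinct branches share only finitely many nodes, and on non-shared nodes either the block differs (disjoint) or, when the code is the same, the same index must occur. To avoid the latter, we use a coding in which distinct nodes receive distinct $n$, so that blocks indexed by distinct nodes are disjoint. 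The intersection is then contained in the finitely many shared blocks. This is still a problem: those blocks are infinite. I therefore also require that, within each shared block, the selected sets coincide only finitely. This seems circular, so the cleaner fix is to pick the block sets to depend on $\sigma$ itself, namely $M_{\sigma}(\beta_n)$ using an injection of branches into $\mathfrak c$. Then shared nodes contribute $M_{\sigma}(\beta_n)\cap M_{\sigma'}(\beta_n)$, which is finite, and finitely many such contributions give a finite intersection.

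The embedding then follows. The induction hypothesis embeds $R^{p,0}_{\beta_n}$ distributionally into $[X_\mu:\mu\in M_\sigma(\beta_n)]$. Composing with $T^\alpha_{\beta_n}$ preserves independence of the ranges, so Proposition \ref{distributional embedding for the limit case} combined with Proposition \ref{embed of limit BRS space}\ref{embed of limit BRS space,2} yields a distributional embedding of $R^{p,0}_\alpha$ into $[X_\lambda:\lambda\in M_\sigma(\alpha)]$.

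\emph{Successor case $\alpha=\beta+k$, with $\beta$ a limit and $k\ge1$.} Set
\[
M_\gamma(\alpha)=\{(\alpha)\}\cup\{(\alpha,I)^\frown\mu:\ I\in\mathcal D^k,\ \mu\in M_\gamma(\beta)\}.
\]
This family is almost disjoint, since the intersection of two members is $\{(\alpha)\}$ together with $2^k$ copies of a finite set. By the induction hypothesis, $R_\beta^{p,0}$ embeds distributionally into $Y_\gamma:=[X_\mu:\mu\in M_\gamma(\beta)]$. Proposition \ref{distributional embedding for the successor case} with $X_I=R_\beta^{p,0}$ and $Y_I=Y_\gamma$, together with Remark \ref{rem for the mean zero BRS spaces }(b) and Remark \ref{recursive structure of Xlambda}(b), shows that $R^{p,0}_\alpha$ embeds into $[X_\lambda:\lambda\in M_\gamma(\alpha)]$.

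The main obstacle is the limit-case bookkeeping that makes the family almost disjoint despite the infinite blocks. The fix is to propagate the same index $\gamma$ into the lower-level families, after fixing a bijection so that every level is indexed by the same set of size $\mathfrak c$.
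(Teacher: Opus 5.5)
Your proof is correct and, once the exploratory detours in the limit case are set aside, it is the paper's argument. Your final family $\{(\alpha)^\frown\mu:\ \mu\in M_\sigma(\beta_m),\ m\in\Gamma_\sigma\}$ is exactly the paper's $M_\gamma(\alpha)$ (your branch coding is just a concrete choice of the almost disjoint family $N_\gamma$), the successor case is identical, and the embeddings and complementation come from the same Propositions~\ref{distributional embedding for the limit case}, \ref{distributional embedding for the successor case}, \ref{embed of limit BRS space} and Remark~\ref{crucial remark for orth compl}; the only cosmetic difference is that for $\alpha=\omega$ you apply Proposition~\ref{embed of limit BRS space}\ref{embed of limit BRS space,2} with $\beta_n=n$ directly, whereas the paper writes out the maps $J_n(T_n^\omega h_I)=T_{N(n)}^\omega h_I$ by hand.
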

\begin{proof} By induction on $\omega \leq \alpha < \omega_{1}$, we prove that the space $R_{\alpha}^{p,0}$ has property $(\#^{\prime})$. In particular, we construct an almost disjoint continuum $\lbrace M_\gamma (\alpha): \gamma < \mathfrak{c} \rbrace$ of infinite subsets of $\mathcal{T}_{\alpha}$ such that, for every $\gamma < \mathfrak{c}$, the space $R_{\alpha}^{p,0}$ distributionally embeds into
\begin{align*}
\Big[ X_{\lambda}: \lambda \in M_\gamma (\alpha) \Big].
\end{align*}

Fix an almost disjoint continuum $\lbrace N\gamma: \gamma < \mathfrak{c} \rbrace$ of infinite subsets of $\mathbb{N}$. We begin the inductive process with the base case. Let $\gamma < \mathfrak{c}$. Define
\begin{align*}
M_\gamma (\omega) = \lbrace (\omega,n): n \in N_\gamma \rbrace.
\end{align*}
We claim that the family $\lbrace M_\gamma (\omega): \gamma < \mathfrak{c} \rbrace$ is an almost disjoint continuum. Indeed, let $\gamma, \delta < \mathfrak{c}$ with $\gamma \neq \delta$. Observe that
\begin{align*}
M_\gamma (\omega) \cap  M_\delta (\omega) = \lbrace (\omega,n): n \in N_\gamma \cap N_\delta \rbrace,
\end{align*}
which is finite by the almost disjointness of the family $\lbrace N_\gamma: \gamma < \mathfrak{c} \rbrace$. Let $\gamma < \mathfrak{c}$. Next, we show that $R_{\omega}^{p,0}$ distributionally embeds into
\begin{align*}
\Big[ X_{\lambda}: \lambda \in M_\gamma (\omega) \Big].
\end{align*}
Note that
\begin{align*}
\Big[ X_{\lambda}: \lambda \in M_\gamma (\omega) \Big] = \Big[ T_{n}^{\omega}(L_{p}^{n,0}): n \in N_\gamma \Big].
\end{align*}
Choose an increasing sequence $(N(n))_{n=1}^{\infty}$ in $N_\gamma$ such that $N(n) > n$. For every $n \in \mathbb{N}$, let $J_{n}:T_n^\omega (L_p^{n,0}) \to T^{\omega}_{N(n)} (L_p^{N(n),0})$ be a distributional embedding. Such a distributional embedding can be constructed as $J_{n}(T_{n}^{\omega}h_I) = T_{N(n)}^{\omega}h_I$, for all $I \in \mathcal{D}_{n-1}$.  The spaces $(T_n^\omega (L_p^{n,0}))_{n=1}^{\infty}$ are independent, and the distributional embeddings $(J_{n})_{n=1}^{\infty}$ have independent ranges. Proposition \ref{distributional embedding for the limit case} now completes the proof of the base case.

Now, we establish the successor case of the inductive step. Let $\omega < \alpha < \omega_{1}$ be a limit ordinal, and let $n \in \mathbb{N}$. Assume that the space $R_{\alpha}^{p,0}$ satisfies property $(\#^{\prime})$, witnessed by an almost disjoint continuum $\lbrace M_\gamma(\alpha): \gamma < \mathfrak{c} \rbrace$ of infinite subsets of $\mathcal{T}_{\alpha}$. We next show that $R_{\alpha+n}^{p,0}$ satisfies property $(\#^{\prime})$. We construct an almost disjoint continuum $\lbrace M_\gamma(\alpha + n): \gamma < \mathfrak{c} \rbrace$ of infinite subsets of $\mathcal{T}_{\alpha + n}$. Let $\gamma < \mathfrak{c}$. Define
\begin{align*}
M_\gamma (\alpha + n) = \lbrace (\alpha + n) \rbrace \cup \lbrace (\alpha + n, I)^{\frown}\mu: I \in \mathcal{D}^{n}, \mu \in M_{\gamma}(\alpha) \rbrace.
\end{align*}
We prove that the family $\lbrace M_\gamma(\alpha + n): \gamma < \mathfrak{c} \rbrace$ is almost disjoint. Let $\gamma , \delta < \mathfrak{c}$ with $\gamma \neq \delta$. Note that
\begin{align*}
M_\gamma (\alpha + n) \cap M_\delta (\alpha + n) = \lbrace (\alpha + n) \rbrace \cup \lbrace (\alpha + n, I)^{\frown}\mu: I \in \mathcal{D}^{n}, \mu \in M_{\gamma}(\alpha) \cap M_\delta (\alpha) \rbrace,
\end{align*}
which is finite by the almost disjointness of the family $\lbrace M_\gamma(\alpha): \gamma < \mathfrak{c} \rbrace$. Let $\gamma < \mathfrak{c}$. We show that the space $R_{\alpha+n}^{p,0}$ distributionally embeds into
\begin{align*}
\Big[ X_{\lambda}: \lambda \in M_\gamma (\alpha + n) \Big].
\end{align*}
Note that
\begin{align*}
\Big[ X_{\lambda}: \lambda \in M_\gamma (\alpha + n) \Big] =  \Big[ L_{p}^{n,0} \bigcup \bigcup_{I \in \mathcal{D}^{n}} T_{I} \Big( \Big[ X_\mu: \mu \in M_\gamma (\alpha) \Big] \Big) \Big].
\end{align*}
By the induction hypothesis, the space $R_{\alpha}^{p,0}$ distributionally embeds into
\begin{align*}
\Big[ X_{\mu}: \mu \in M_\gamma (\alpha ) \Big].
\end{align*}
Applying Proposition \ref{distributional embedding for the successor case} with $X_{I} = R_{\alpha}^{p,0}$ and $Y_{I} = \Big[ X_{\mu}: \mu \in M_\gamma (\alpha ) \Big]$, $I \in \mathcal{D}^{n}$, we complete the proof of the successor case.

Next, we establish the limit case of the inductive step. Let $\omega < \alpha < \omega_{1}$ be a limit ordinal number. For every ordinal number $\omega \leq \beta < \alpha$, assume that the space $R_{\beta}^{p,0}$ satisfies property $(\#^{\prime})$, witnessed by an almost disjoint continuum $\lbrace M_\gamma (\beta): \gamma < \mathfrak{c} \rbrace$ of infinite subsets of $\mathcal{T}_{\beta}$. We show that the space $R_{\alpha}^{p,0}$ satisfies property $(\#^{\prime})$. Choose an increasing sequence of ordinal numbers $(\beta_{n})_{n=1}^{\infty}$ that is cofinal in $\alpha$. For $\gamma < \mathfrak{c}$, define 
\begin{align*}
M_{\gamma} (\alpha) = \lbrace (\alpha)^{\frown}\mu: \mu \in M_\gamma (\beta_{n}), n \in N_\gamma  \rbrace.    
\end{align*}
We claim that the family $\lbrace M_\gamma (\alpha): \gamma < \mathfrak{c} \rbrace$ is almost disjoint. Indeed, let $\gamma, \delta < \mathfrak{c}$ with $\gamma \neq \delta$. By the comment following Definition \ref{definition of the indexed trees}, $\mathcal{T}_{\alpha}$ is the disjoint union of the trees $\mathcal{T}_{\beta}$, $\beta < \alpha$. Therefore \begin{align*}
M_\gamma (\alpha) \cap M_\delta (\alpha) = \lbrace (\alpha)^{\frown} \mu: \mu \in M_\gamma(\beta_n) \cap M_\delta (\beta_n), n \in N_\gamma \cap N_\delta \rbrace,    
\end{align*}
which is finite by the almost disjointness of the families $\lbrace N_{\gamma}: \gamma < \mathfrak{c} \rbrace$ and $\lbrace M_\gamma (\beta): \gamma < \mathfrak{c} \rbrace$, $ \omega \leq \beta < \alpha$. Let $\gamma < \mathfrak{c}$. We prove that the space $R_{\alpha}^{p,0}$ distributionally embeds into 
\begin{align*}
\Big[ X_{\lambda}: \lambda \in M_\gamma (\alpha) \Big].    
\end{align*}
Note that the sequence $(\beta_n)_{n \in N_\gamma}$ remains increasing and cofinal in $\alpha$. Thus, Proposition \ref{embed of limit BRS space} \ref{embed of limit BRS space,2} implies that the space $R_{\alpha}^{p,0}$ distributionally embeds into 
\begin{align*}
    \Big[  \bigcup_{n \in N_\gamma} T_{\beta_{n}}^{\alpha} ( R_{\beta_n}^{p,0}) \Big],
\end{align*}
which in combination with the claim that the space 
\begin{align*}
    \Big[  \bigcup_{n \in N_\gamma} T_{\beta_{n}}^{\alpha} ( R_{\beta_n}^{p,0}) \Big]
\end{align*}
distributionally embeds into 
\begin{align*}
\Big[ X_{\lambda}: \lambda \in M_\gamma (\alpha) \Big],  
\end{align*}
completes the proof of the limit case. Now, we prove the aforementioned claim. For every $n \in N_\gamma$, by the induction hypothesis, the space $R_{\beta_n}^{p,0}$ distributionally embeds into   
\begin{align*}
\Big[ X_{\mu}: \mu \in M_\gamma (\beta_n) \Big].
\end{align*}
For every $n \in N_\gamma$, denote 
\begin{align*}
F_{\beta_n} = \Big[ X_{\mu}: \mu \in M_\gamma (\beta_n) \Big],
\end{align*}
and note that 
\begin{align*}
\Big[ X_{\lambda}: \lambda \in M_\gamma (\alpha) \Big] = \Big[  T_{\beta_n}^{\alpha} (F_{\beta_n}) : n \in N_\gamma \Big].
\end{align*}
For every $n \in N_\gamma$, note that the space $T_{\beta_n}^{\alpha} ( R_{\beta_n}^{p,0})$ distributionally embeds into  $T_{\beta_n}^{\alpha} ( F_{\beta_n})$, and denote $J_{\beta_n} : T_{\beta_n}^{\alpha} ( R_{\beta_n}^{p,0}) \to T_{\beta_n}^{\alpha} ( F_{\beta_n})$ such a distributional embedding. Observe that the spaces $(T_{\beta_n}^{\alpha} ( R_{\beta_n}^{p,0}))_{n \in N_\gamma}$ are independent, and the distributional embeddings $(J_{\beta_n})_{n \in N_\gamma}$ have independent ranges. Applying Proposition \ref{distributional embedding for the limit case} we complete the proof of the claim.
\end{proof}

Note that the SHAI property is preserved under isomorphisms.

\begin{rem} Let $\omega \leq \alpha < \omega_1$. Note that $R_{\alpha}^{p}$ is isomorphic to each of its hyperplanes, since it is an infinite-dimensional complemented subspace of $L_{p}$. In particular, $R_{\alpha}^{p} \cong R_{\alpha}^{p,0}$. Thus, the spaces $R_{\alpha}^{p}$ have the SHAI property.  
\end{rem}

\section*{Acknowledgement}

The author gratefully acknowledges Pavlos Motakis for his helpful discussions and valuable feedback.

\bibliographystyle{plain}

\bibliography{bibliography}

@book {albiac:kalton:2006,
    AUTHOR = {Albiac, F. and Kalton, N. J.},
     TITLE = {Topics in {B}anach space theory},
    SERIES = {Graduate Texts in Mathematics},
    VOLUME = {233},
 PUBLISHER = {Springer, New York},
      YEAR = {2006},
     PAGES = {xii+373},
      ISBN = {978-0387-28141-4; 0-387-28141-X},
   MRCLASS = {46B20 (46-01)},
  MRNUMBER = {2192298},
MRREVIEWER = {Gilles\ Godefroy},
}

@article {alspach:1999,
    AUTHOR = {Alspach, D. E.},
     TITLE = {Tensor products and independent sums of {$\mathscr L_p$}-spaces,
              {$1<p<\infty$}},
   JOURNAL = {Mem. Amer. Math. Soc.},
  FJOURNAL = {Memoirs of the American Mathematical Society},
    VOLUME = {138},
      YEAR = {1999},
    NUMBER = {660},
     PAGES = {viii+77},
      ISSN = {0065-9266,1947-6221},
   MRCLASS = {46B20 (46B28 46E30 46M05)},
  MRNUMBER = {1469150},
MRREVIEWER = {Dirk\ Werner},
       DOI = {10.1090/memo/0660},
       URL = {https://doi.org/10.1090/memo/0660},
}

@article {bourgain:rosenthal:schechtman:1981,
    AUTHOR = {Bourgain, J. and Rosenthal, H. P. and Schechtman, G.},
     TITLE = {An ordinal {$L^{p}$}-index for {B}anach spaces, with
              application to complemented subspaces of {$L^{p}$}},
   JOURNAL = {Ann. of Math. (2)},
  FJOURNAL = {Annals of Mathematics. Second Series},
    VOLUME = {114},
      YEAR = {1981},
    NUMBER = {2},
     PAGES = {193--228},
      ISSN = {0003-486X},
   MRCLASS = {46B25 (46E30)},
  MRNUMBER = {632839},
MRREVIEWER = {Gilles Pisier},
       DOI = {10.2307/1971293},
       URL = {https://doi.org/10.2307/1971293},
}

@article{eidelheit:1940,
author = {Eidelheit, M.},
issn = {0039-3223},
journal = {Studia mathematica},
language = {eng ; jpn},
number = {1},
pages = {97-105},
title = {On isomorphisms of rings of linear operators},
volume = {9},
year = {1940},
}

@article{horvath:2020,
author = {Horvath, B.},
address = {WARSZAWA},
copyright = {Copyright 2020 Elsevier B.V., All rights reserved.},
issn = {0039-3223},
journal = {Studia mathematica},
language = {eng},
number = {3},
pages = {259-282},
publisher = {Polish Acad Sciences},
title = {When are full representations of algebras of operators on {B}anach spaces automatically faithful?},
volume = {253},
year = {2020},
}

@article{horvath:kania:shai:2021,
author = {Horvath, B. and Kania, T.},
address = {UK},
copyright = {The Author(s) 2021. Published by Oxford University Press. All rights reserved. For permissions, please e-mail: journals.permissions@oup.com 2021},
issn = {0033-5606},
journal = {Quarterly journal of mathematics},
language = {eng},
number = {4},
pages = {1167-1189},
publisher = {Oxford University Press},
title = {Surjective Homomorphisms from Algebras of Operators on Long Sequence Spaces are Automatically Injective},
volume = {72},
year = {2021},
}

@article{johnson:phillips:schechtman:2022,
title = {The {SHAI} property for the operators on ${L}^{p}$},
journal = {Journal of Functional Analysis},
volume = {282},
number = {4},
pages = {109333},
year = {2022},
issn = {0022-1236},
doi = {https://doi.org/10.1016/j.jfa.2021.109333},
url = {https://www.sciencedirect.com/science/article/pii/S0022123621004158},
author = {W.B. Johnson and N.C. Phillips and G. Schechtman}
}

@phdthesis{konstantos:2026,
  title={Orthogonal factors of operators on the {R}osenthal spaces and the {B}ourgain-{R}osenthal-{S}chechtman spaces},
  author={Konstantos, K.},
  year={2026},
  school={York University Toronto}
}

@article{konstantos:motakis:2025,
title = {Orthogonal factors of operators on the {R}osenthal {$X_{p,w}$} spaces and the {B}ourgain-{R}osenthal-{S}chechtman {$R_\omega^p$} space},
author = {Konstantos, K. and Motakis, P.},
journal = {Journal of Functional Analysis},
volume = {288},
number = {5},
pages = {110802},
year = {2025},
issn = {0022-1236},
doi = {https://doi.org/10.1016/j.jfa.2024.110802},
url = {https://www.sciencedirect.com/science/article/pii/S0022123624004907},

}

@article{konstantos:motakis:brs:2025,
      title={Coordinate systems and distributional embeddings in {B}ourgain-{R}osenthal-{S}chechtman spaces: a framework for operator reduction}, 
      author={Konstantos, K. and Motakis, P.},
      year={2025},
      eprint={2510.24487},
      archivePrefix={arXiv},
      primaryClass={math.FA},
      url={https://arxiv.org/abs/2510.24487},

}

@book{lindenstrauss:tzafriri:1977,
  AUTHOR =	 {Lindenstrauss, J. and Tzafriri, L.},
  TITLE =	 {Classical {B}anach spaces. {I}},
  NOTE =	 {Sequence spaces, Ergebnisse der Mathematik und ihrer Grenzgebiete, Vol. 92},
  PUBLISHER =	 {Springer-Verlag, Berlin-New York},
  YEAR =	 1977,
  PAGES =	 {xiii+188},
  ISBN =	 {3-540-08072-4},
  MRCLASS =	 {46-02 (46A45 46BXX)},
  MRNUMBER =	 {0500056 (58 \#17766)},
  MRREVIEWER =	 {S. V. Kisljakov},
}

@article {rosenthal:1970:Xp,
    AUTHOR = {Rosenthal, H. P.},
     TITLE = {On the subspaces of {$L\sp{p}$} {$(p>2)$} spanned by sequences
              of independent random variables},
   JOURNAL = {Israel J. Math.},
  FJOURNAL = {Israel Journal of Mathematics},
    VOLUME = {8},
      YEAR = {1970},
     PAGES = {273--303},
      ISSN = {0021-2172},
   MRCLASS = {46.35},
  MRNUMBER = {271721},
MRREVIEWER = {A.\ Pietsch},
       DOI = {10.1007/BF02771562},
       URL = {https://doi.org/10.1007/BF02771562},
}

@article {schechtman:1975,
    AUTHOR = {Schechtman, G.},
     TITLE = {Examples of {${\mathcal L}_{p}$} spaces {$(1<p\not=2<\infty )$}},
   JOURNAL = {Israel J. Math.},
  FJOURNAL = {Israel Journal of Mathematics},
    VOLUME = {22},
      YEAR = {1975},
    NUMBER = {2},
     PAGES = {138--147},
      ISSN = {0021-2172},
   MRCLASS = {46B05},
  MRNUMBER = {390722},
MRREVIEWER = {A.\ S.\ Gleit},
       DOI = {10.1007/BF02760162},
       URL = {https://doi.org/10.1007/BF02760162},
}

\end{document}